
\documentclass[compress, preprint,9pt]{elsarticle}

\usepackage[abs]{overpic}
\usepackage{graphicx}
\usepackage{subfigure}
\usepackage{stmaryrd}
\usepackage{amsfonts, color, amssymb}
\usepackage{amsmath}
\usepackage{amsthm}
\usepackage{epsfig}
\usepackage{psfrag}
\usepackage{bm}
\usepackage{paralist}
\usepackage{algorithm}
\usepackage{algorithmicx}
\usepackage{algpseudocode}
\usepackage{appendix}
\usepackage{caption}
\usepackage{hyperref}
\usepackage{color}

\usepackage[misc]{ifsym}

\usepackage[a4paper, total={6in,8in}]{geometry}

\allowdisplaybreaks[4]
\newtheorem{theorem}{Theorem}[section] %
\newtheorem{definition}{Definition}[section] %

\newtheorem{example}{Example}[section]

\newtheorem{lemma}{Lemma}[section]

\usepackage{multirow, booktabs, bigstrut}
\newtheorem{assumption}{Assumption}[section] %
\def\3bar{{|\hspace{-.02in}|\hspace{-.02in}|}}

\allowdisplaybreaks[4]

\allowdisplaybreaks 

\setlength{\parindent}{0.25in} \setlength{\parskip}{0.08in}

\numberwithin{equation}{section}

\journal{Journal of Computational and Applied Mathematics}

\begin{document}

\begin{frontmatter}

\title{Convergence analysis of a weak Galerkin finite element method on a Bakhvalov-type mesh for a singularly perturbed convection-diffusion equation in 2D}

\author[mymainaddress]{Shicheng Liu}
\ead{lsc22@mails.jlu.edu.cn}

\author[mysecondaddress]{Xiangyun Meng}
\ead{xymeng1@bjtu.edu.cn}

\author[mymainaddress]{Qilong Zhai\corref{mycorrespondingauthor}}
\cortext[mycorrespondingauthor]{Corresponding author}
\ead{zhaiql@jlu.edu.cn}

\address[mymainaddress]{School of Mathematics, Jilin University, Changchun {130012}, China}
\address[mysecondaddress]{School of Mathematics and Statistics, Beijing Jiaotong University, Beijing {100044}, China} 

\begin{abstract}
	In this paper, we propose a weak Galerkin finite element method (WG) for solving singularly perturbed convection-diffusion problems on a Bakhvalov-type mesh in 2D. Our method is flexible and allows the use of discontinuous approximation functions on the meshe. An error estimate is devised in a suitable norm and the optimal convergence order is obtained. Finally, numerical experiments are given to support the theory and to show the efficiency of the proposed method.
\end{abstract}

%

\begin{keyword}
 


Weak Galerkin finite element method, convection-diffusion,  singularly perturbed, Bakhvalov-type mesh.

\MSC[2020] 65N15 \sep 65N30 \sep 35B25

\end{keyword}

\end{frontmatter}


\section{Introduction}
\label{section:introduction}
Consider the following singularly perturbed convection-diffusion problem
\begin{align}
  -\varepsilon \Delta u -\mathbf{b} \cdot \nabla u + cu &= f, \quad \text{in}~\Omega,\label{1.1}\\
  u&=0, \quad \text{on}~\partial\Omega,\label{1.2}
\end{align}
with a positive parameter $\varepsilon$ satisfying $0< \varepsilon \ll 1$, and $\mathbf{b} \in [W^{1, \infty}(\Omega)]^{2}$. The functions $\mathbf{b}$, $c$ and $f$ are assumed to be smooth on $\Omega$. For any $(x, y) \in \overline{\Omega}$, Assume that 
\begin{align}\label{1.3}
  \begin{aligned}
    &b_{1}(x, y) \geq \beta_{1} \geq 0, 
    &&b_{2}(x,y) \geq \beta_{2} \geq 0,
    &&c(x,y) + \frac{1}{2} \nabla \cdot \mathbf{b}(x,y) \geq \gamma \geq 0.
  \end{aligned}
\end{align}
where $\beta_{1}$, $\beta_{2}$ and $\gamma$ are some positive constants. Assumption (\ref{1.3}) makes problem (\ref{1.1})-(\ref{1.2}) has a unique solution in $H^{2}(\Omega) \cup H_{0}^{1}(\Omega)$ for all $f \in L^{2}(\Omega)$, see details in \cite{MR2454024}.

Singularly perturbed problems are one of the important topics in scientific computation. It is well known that the solution of the boundary value problem usually has layers, which are thin regions where the solution or its derivatives change rapidly, due to the diffusion coefficient is very small. In order to resolve the difficulty, numerical stabilization techniques have been developed, which can be divided into fitted operator methods and fitted mesh methods. One of the effective methods of solving singularly perturbed problems is to use layer-adapted meshes. Boundary layers can be resolved by designing layer-adapted meshes if we know a prior knowledge of the layers structure. Commonly used layer-adapted meshes for solving singularly perturbed problems include Bakhvalov-type meshes and Shishkin-type meshes.
Bakhvalov mesh is proposed for the first time in \cite{MR255066}, its application needs a nonlinear equation which cannot be solved explicitly. In order to avoid this difficulty, meshes that arise from an approximation of Bakhvalov's mesh generating function are called Bakhvalov-type meshes, which are one of the most popular layer-adapted meshes, see details in \cite{MR2583792}. Another piecewise equidistant mesh proposed by Shishkin in \cite{Moscow}, but a logarithmic factor will be present in the error bounds when one uses a Shishkin-type mesh. Therefore, Bakhvalov-type meshes have better numerical performance than Shishkin-type meshes in general. 
Even if the layer-adapted meshe is used, the numerical solution of the convection-dominated problem still has some oscillation, as detailed in \cite{MR2785883}. Additional stabilization is added to the numerical scheme to solve these oscillatory behaviour, examples for singularly perturbed onvection-diffusion problem, such as the streamline-diffusion finite element method \cite{MR1811645, MR3813259}, the classical finite difference method of up-winding flavor \cite{MR1690860, MR1825803, MR1849244, MR4440028} and the discontinuous Galerkin methods \cite{MR2452853, MR2495786, MR2355174, MR2194592, MR3419822, MR3143687}.

In this paper, we consider the WG method to solve the singularly perturbed convection-diffusion boundary value problem on Bakhvalov-type mesh. The WG method proves to be an effective numerical technique for the partial differential equations(PDEs). The main idea of this method is that the classical derivative is replaced by weak derivative, which allows the use of discontinuous functions in numerical schemes with parameter independent stabilizers. The initial proposal for its application in solving second-order elliptic problems was made by Junping Wang and Xiu Ye in \cite{wysec2}. The WG method has been applied to all kinds of problems including Stokes equations \cite{wy1302, WangZhaiZhangS2016, wangwangliu2022}, Maxwell's equations \cite{MLW14}, Brinkman equations \cite{MuWangYe14, WangZhaiZhang2016, ZhaiZhangMu16}, fractional time convection-diffusion problems \cite{MR4268647} and so on. For singular perturbed value problems, the WG method has also yielded some results, such as \cite{MR3805854, MR4360590, MR4345844, MR4631818, WG_1D_Bakhvalov, WG_1D_Shishkin, WG_2D_Shishkin, singularly_perturbed_biharmonic}. The main purpose of this paper is to present optimal order uniform convergence in the energy norm on Bakhvalov-type mesh for convection-dominated problems in 2D.

This paper is organized as follows.
In Section 2, we describe the assumptions and introduce a Bakhvalov-type mesh. In Section 3, we introduce the definition of weak operator, the WG scheme and some properties of projection operator involved. In Section 4, we provide convergence analysis. In Section 5, the numerical results verify the correctness of our theory.

\section{Assumption and Partition}
In this section, we will introduce the construction of the Bakhvalov-type mesh and the decomposition of the solution $u$. As in \cite{MR1778398} we shall introduce the following assumptions which describes the structure of $u$.
\begin{assumption}\label{assume2.1}
  For analysis, the solution $u$ of the equation (\ref{1.1}) can can be decomposed as follows
  \begin{align*}
    u=S+E_{1}+E_{2}+E_{12}.
  \end{align*}
  where $S$ represents the smooth part, $E_{1}$ and $E_{2}$ corresponds to boundary layer components, $E_{12}$ is corner layer part. Then, for $0 \leq i+j \leq k+1$, there exists a constant $C$ such taht
  \begin{align}
       &\left\vert\frac{\partial^{i+j}S}{\partial x^{i}\partial y^{j}}\right\vert\leq C, 
      & &\left\vert\frac{\partial^{i+j}E_{1}}{\partial x^{i}\partial y^{j}}\right\vert\leq C\varepsilon^{-j}e^{-\frac{\beta_{2}y}{\varepsilon}},\\
      &\left\vert\frac{\partial^{i+j}E_{2}}{\partial x^{i}\partial y^{j}}\right\vert\leq C\varepsilon^{-i}e^{-\frac{\beta_{1}x}{\varepsilon}},
      & &\left\vert\frac{\partial^{i+j}E_{12}}{\partial x^{i}\partial y^{j}}\right\vert\leq C\varepsilon^{-(i+j)}e^{-\frac{\beta_{1}x+\beta_{2}y}{\varepsilon}}.
  \end{align}
  \end{assumption}

  For the convection-diffusion problem (\ref{1.1})-(\ref{1.2}), we consider the following Bakhvalov-type mesh introduced in \cite{MR2583792}, which is defined by
\begin{align}\label{bakhvalov_mesh}
  \begin{aligned}
  x_{i} = 
    \begin{cases}
      -\frac{\sigma \varepsilon}{\beta_{1}} \ln(1-2(1-\varepsilon)i/N), \quad \text{for} ~ i=0, \cdots, N/2\\
      1-(1-x_{\frac{N}{2}})2(N-i)/N, \quad \text{for} ~ i=\frac{N}{2}+1, \cdots, N 
    \end{cases}\\
  y_{j} = 
    \begin{cases}
      -\frac{\sigma \varepsilon}{\beta_{1}} \ln(1-2(1-\varepsilon)j/N), \quad \text{for} ~ j=0, \cdots, N/2\\
      1-(1-y_{\frac{N}{2}})2(N-j)/N, \quad \text{for} ~ j=\frac{N}{2}+1, \cdots, N 
    \end{cases}
  \end{aligned}
\end{align}
where $N \geq 4$ is an positive integer and $\sigma \geq k+1$, see Figure \ref{figure_1}. Transition points are $x_{N/2}$ and $y_{N/2}$, indicating a shift in mesh from coarse to fine. Then, we get a rectangulation mesh denoted as $\mathcal{T}_{N}$. For the sake of briefness, set
\begin{align*}
  &\Omega_{12} = : [x_{0}, x_{N/2-1}] \times [y_{0}, y_{N/2-1}],
  &&\Omega_{1} = : [x_{N/2-1}, x_{N}] \times [y_{0}, y_{N/2-1}],\\
  &\Omega_{2}  = : [x_{0}, x_{N/2-1}] \times [y_{N/2-1}, y_{N}],
  &&\Omega_{0} = : [x_{N/2-1}, x_{N}] \times [y_{N/2-1}, y_{N}].
\end{align*}
\begin{figure}[h]
  \centering
  \includegraphics[width=12cm]{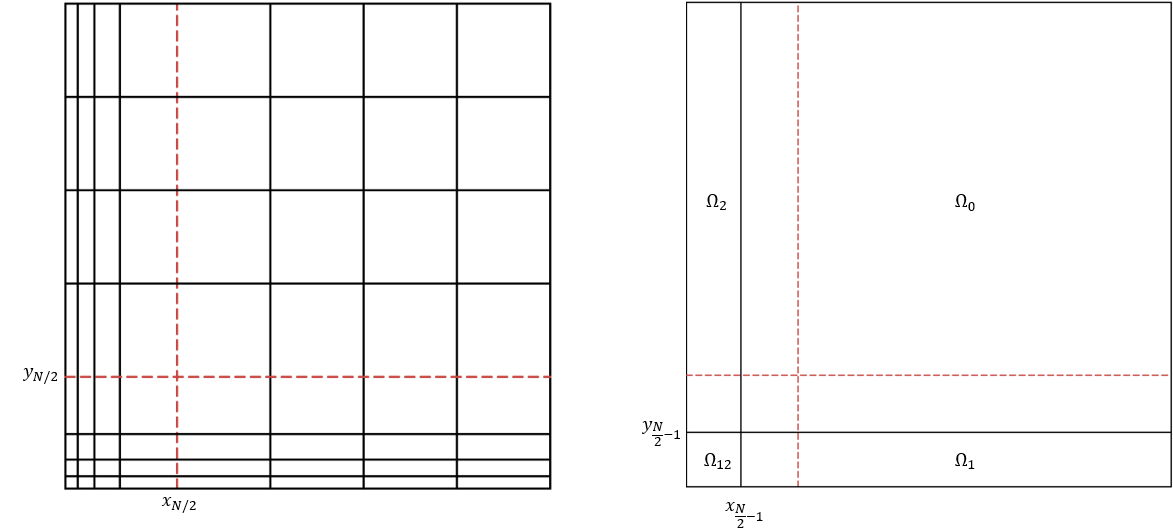}
      \caption{A Bakhvalov-type mesh with $N = 8$ and dissection of $\Omega$.}
      \label{figure_1}
\end{figure}

Let $h_{x,i} = x_{i} - x_{i-1}$ and $h_{y, j} = y_{j} - y_{j-1}$, we omit the subscript $x$ or $y$ if there is no confusion. According to \cite{MR4152300}, we have the following lemma which introduce some important properties of Bakhvalov-type mesh.
\begin{lemma} \label{lemma2.1}
  In this paper, suppose that $\varepsilon \leq N^{-1}$, then for Bakhvalov-type mesh (\ref{bakhvalov_mesh}), one can obtain the properties
  \begin{align}
    & C_{1} \varepsilon N^{-1} \leq h_{1} \leq C_{2} \varepsilon N^{-1},\\
    & h_{1} \leq h_{2} \leq \cdots \leq h_{N/2-1},\\
    & \frac{1}{4} \sigma \varepsilon \leq h_{N/2-1} \leq \sigma \varepsilon,\\
    & \frac{1}{2} \sigma \varepsilon \leq h_{N/2} \leq 2 \sigma N^{-1},\\
    & N^{-1} \leq h_{i} \leq 2 N^{-1}, \quad N/2+1 \leq i \leq N,\\
    & C_{3} \sigma \varepsilon \ln N \leq x_{N/2} \leq C_{4} \sigma \ln N, \quad X_{N/2} \geq C \sigma \varepsilon \vert \ln \varepsilon \vert,\\
    & h_{i}^{\rho} e^{-\beta_{1}x / \varepsilon} \leq C \varepsilon^{\rho} N^{-\rho}, \quad 1 \leq i \leq N/2-1, 0 \leq \rho \leq \sigma.
  \end{align}
\end{lemma}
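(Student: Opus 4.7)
The plan is to exploit the explicit mesh generator in the fine region. For $1\le i\le N/2$ set $\phi(t)=-\frac{\sigma\varepsilon}{\beta_{1}}\ln\!\bigl(1-2(1-\varepsilon)t/N\bigr)$, so $x_{i}=\phi(i)$, and the mean value theorem yields
\[
h_{i}=\phi'(\xi_{i})=\frac{2\sigma\varepsilon(1-\varepsilon)/(\beta_{1}N)}{1-2(1-\varepsilon)\xi_{i}/N},\qquad \xi_{i}\in(i-1,i).
\]
Since the denominator decreases with $\xi_{i}$, $\phi'$ is increasing, which immediately gives the monotonicity chain $h_{1}\le h_{2}\le\cdots\le h_{N/2-1}$. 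Specialising to $t\in[0,1]$ and using $N\ge 4$ (so $2(1-\varepsilon)/N\le 1/2$) together with $\varepsilon\le N^{-1}$ (so $1-\varepsilon\ge 3/4$) gives the two-sided bound $C_{1}\varepsilon N^{-1}\le h_{1}\le C_{2}\varepsilon N^{-1}$. For $h_{N/2-1}$, the algebraic identity $1-2(1-\varepsilon)(N/2-k)/N=\varepsilon+2k(1-\varepsilon)/N$ puts it in the closed form $\frac{\sigma\varepsilon}{\beta_{1}}\ln\!\bigl[(\varepsilon+4(1-\varepsilon)/N)/(\varepsilon+2(1-\varepsilon)/N)\bigr]$, and $\varepsilon\le N^{-1}$ pins this ratio between a constant strictly bigger than $1$ and $2$.

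The same identity at $k=0$ gives the clean formula $x_{N/2}=\sigma\varepsilon|\ln\varepsilon|/\beta_{1}$. The lower bound on $x_{N/2}$ then follows from $|\ln\varepsilon|\ge\ln N$, and the upper bound from monotonicity of $t\mapsto t|\ln t|$ on $(0,1/\mathrm{e})$ combined with $\varepsilon\le N^{-1}\le 1/4$. For the coarse branch, the definition telescopes at once: $h_{i}=2(1-x_{N/2})/N$ is a constant for $N/2+1\le i\le N$, so once one verifies $x_{N/2}\le 1/2$ (implicit in the usual smallness hypothesis on $\varepsilon$) the bound $N^{-1}\le h_{i}\le 2N^{-1}$ is immediate. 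Subtracting $x_{N/2-1}$ from $x_{N/2}$ gives $h_{N/2}=(\sigma\varepsilon/\beta_{1})\ln\!\bigl(1+2(1-\varepsilon)/(N\varepsilon)\bigr)$, and the two-sided control comes from analysing $s\mapsto s\ln(1+2/s)$ on $(0,1]$ with $s=N\varepsilon$.

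The main obstacle is the weighted bound $h_{i}^{\rho}e^{-\beta_{1}x/\varepsilon}\le C\varepsilon^{\rho}N^{-\rho}$. The key algebraic identity is
\[
e^{-\beta_{1}x_{i-1}/\varepsilon}=\bigl(1-2(1-\varepsilon)(i-1)/N\bigr)^{\sigma},
\]
obtained by directly exponentiating the defining formula for $x_{i-1}$. Combined with the upper bound
\[
h_{i}\le\frac{C\varepsilon}{N}\cdot\frac{1}{1-2(1-\varepsilon)(i-1)/N}
\]
that follows from evaluating $\phi'$ at the right endpoint of $(i-1,i)$ (the comparison of the denominator at $\xi_{i}$ with the one at $i-1$ is uniform thanks to $\varepsilon\le N^{-1}$), raising to appropriate powers yields
\[
h_{i}^{\rho}e^{-\beta_{1}x_{i-1}/\varepsilon}\le C(\varepsilon/N)^{\rho}\bigl(1-2(1-\varepsilon)(i-1)/N\bigr)^{\sigma-\rho},
\]
and the assumption $\rho\le\sigma$, together with the parenthesised factor lying in $(0,1]$, closes the estimate. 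For a general $x\in[x_{i-1},x_{i}]$ one replaces $e^{-\beta_{1}x/\varepsilon}$ by $e^{-\beta_{1}x_{i-1}/\varepsilon}$ via monotonicity of the exponential.
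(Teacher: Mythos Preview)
Your proof is correct. The paper itself does not prove this lemma: it simply attributes the properties to \cite{MR4152300} and records them for later use in the error analysis. What you have written is precisely the direct verification, based on the explicit mesh-generating function $\phi$, the mean value theorem, and the crucial algebraic identity $e^{-\beta_{1}x_{i-1}/\varepsilon}=\bigl(1-2(1-\varepsilon)(i-1)/N\bigr)^{\sigma}$ that makes the weighted bound transparent.

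Two minor remarks. First, the explicit constants $\tfrac14$, $\tfrac12$, $2$ in the lemma tacitly absorb the factor $\beta_{1}^{-1}$ coming from the mesh definition; your computations deliver the correct orders, and the paper is simply being informal about the numerical values. Second, your parenthetical on the coarse-region lower bound $h_{i}\ge N^{-1}$ is well placed: this genuinely requires $x_{N/2}\le\tfrac12$, which does not follow from $\varepsilon\le N^{-1}$ alone for arbitrary $\sigma,\beta_{1}$, but is guaranteed once $N$ exceeds a threshold depending only on $\sigma/\beta_{1}$, via the estimate $x_{N/2}\le(\sigma/\beta_{1})N^{-1}\ln N$ that you derived. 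The paper, in common with most of the Bakhvalov-mesh literature, leaves this implicit.
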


\section{WG scheme}
In this section,  we introduce the notions of WG method. Let $T \in \mathcal{T}_{N}$ be any element with boundary $\partial T$.
We introduce a weak function $v = \{v_0, v_b\}$ on the element $T$, where $v_0 \in L^2(T)$, and $v_b \in L^{2}(\partial T)$. 
Note that $v_b$ has a single value on each edge $e$. Additionally, component $v_b$ may not necessarily be the same as the trace of $v_0$ on $\partial T$.

For any integer $k \geq 1$, we introduce the space of weak functions
\begin{equation*}
  V_N=\left\{v=\{v_0, v_b\}: v_0\in \mathbb{Q}_{k}(T), v_b\in \mathbb{P}_{k}(e), e \in \partial T, \forall T\in\mathcal{T}_{N}\right\},
\end{equation*}
where $\mathbb{Q}_{k}$ is the space of polynomials which are of degree not exceeding $k$ with respect to each one of the variables $x$ and $y$.
Let $V_{N}^{0}$ represent the subspace of $V_h$ defined by
\begin{equation*}
  V_{N}^{0}=\left\{v \in V_h, v_b|_e=0, e\subset\partial T\cap \partial\Omega\right\}.
\end{equation*}

\begin{definition}
  For any $v \in V_{N}$, a discrete weak gradient $\nabla_{w}$ is defined on $T$ as a unique polynomial $\nabla_{w} v \in [\mathbb{Q}_{k}(T)]^{2}$ satisfying:
  \begin{align}\label{3.1}
    \left(\nabla_{w}v, \mathbf{q}\right)_T=-\left(v_0, \nabla\cdot \mathbf{q}\right)_T+\left\langle v_b, \mathbf{q}\cdot\mathbf{n}\right\rangle_{\partial T}, \quad\forall \mathbf{q}\in \left[\mathbb{Q}_k(T)\right]^{2}. 
  \end{align} 
\end{definition}

\begin{definition}
  For any $v \in V_{N}$, a discrete weak convection divergence $\nabla_{w}^{b} v \in \mathbb{Q}_k(T)$ related to $\mathbf{b}$ is defined on $T$ as a unique polynomial satisfyings
  \begin{align}\label{3.2}
    \left(\nabla_{w,k}^{b} v, \tau  \right)_T=-\left(v_0, \nabla\cdot (\mathbf{b} \varphi) \right)_T+\left\langle v_b, \mathbf{b} \cdot \mathbf{n} \varphi \right\rangle_{\partial T}, \quad\forall \varphi \in \mathbb{Q}_k(T). 
  \end{align}
\end{definition}

The following notations are often used 
\begin{align*}
  &(\varphi, \phi)_{\mathcal{T}_{N}} = \sum_{T \in \mathcal{T}_{N}} (\varphi, \phi)_{T}, 
  &&\langle \varphi, \phi \rangle_{\partial \mathcal{T}_{N}} = \sum_{T \in \mathcal{T}_{N}} \langle \varphi, \phi \rangle_{\partial T}, 
  &&\Vert \varphi \Vert_{\mathcal{T}_{N}}^{2} = \sum_{T \in \mathcal{T}_{N}} \Vert \varphi \Vert_{T}^{2},
\end{align*}
We define $\partial T_{x}$ and $ \partial T_{y}$ as the sets of element edges that are parallel to the $x$ and $y$ axes, respectively. Denote
$\partial_{+}T=\{(x,y) \in \partial T | \mathbf{b}(x,y) \cdot \mathbf{n}(x,y) \leq 0\}$, and $S_{T}$ is the penalization parameter given by
\begin{align*}
  \vartheta_{T}=
  \begin{cases}
    N, &\text{if} ~ T \in \Omega_{0}, \\
    \varepsilon h_{y}^{-1}, &\text{if} ~ T \in \Omega \backslash \Omega_{0}, \text{on} ~ \partial T_{x},\\
    \varepsilon h_{x}^{-1}, &\text{if} ~ T \in \Omega \backslash \Omega_{0}, \text{on} ~ \partial T_{y}.
  \end{cases}
\end{align*}
Then, a WG algorithm is proposed in the following. 

\begin{algorithm}
  Find approximate solution $u_{N} = \{u_{0}, u_{b}\} \in V_{N}^{0}$ satisfying
  \begin{align}\label{3.3}
    A(u_N, v)=(f, v_0), \quad \forall v\in V_{N}^{0}.
  \end{align}
  where
  \begin{align*}
    A(u_N, v)=& A_d(u_{N},v)+A_c(u_{N},v),
  \end{align*}
  and
  \begin{align*}
    A_d(u_{N},v) =& (\nabla_w u_N,\nabla_w v)_{\mathcal{T}_{N}} + s_{d}\left(u_N, v\right),\\
    A_c(u_{N},v) =& (\nabla_w^{b} u_N, v_0)_{\mathcal{T}_{N}} + (c u_0, v_0)_{\mathcal{T}_{N}} +s_{c}\left(u_N, v\right)\\
    s_{d}(u_{N}, v) =& \sum_{i=x, y} \sum_{T\in\mathcal{T}_N} \vartheta_{T} \left\langle u_0-u_b,v_0-v_b \right\rangle_{\partial T_{i}},\\
	  s_{c}(u_{N}, v) =& \sum_{\substack{T\in\mathcal{T}_N\\ e \in \partial_{+} T}}  \left\langle -\mathbf{b} \cdot \mathbf{n} (u_0-u_b), v_0-v_b \right\rangle_{e},
  \end{align*}
\end{algorithm}

\begin{definition}
  From the bilinear form in (\ref{3.3}) , for all $v \in V_{N}$ we can derive an energy norm defined by
  \begin{align}\label{3.4}
    \3bar v \3bar^2 = A_{d}(v, v)+ \Vert v_0\Vert_{\mathcal{T}_{h}} +\Vert \vert \mathbf{b} \cdot \mathbf{n} \vert^{\frac{1}{2}} v_0-v_b \Vert_{\partial{\mathcal{T}_{h}}},
  \end{align}
\end{definition}

\begin{lemma}
  There exists a positive constant $\alpha$, independent of $\varepsilon$, such that for $v \in V_{N}^{0}$
  \begin{equation} \label{3.5}
    A(v, v) \geq \alpha \3bar v \3bar^{2},
  \end{equation}
  where $\alpha = \min\{\gamma, \frac{1}{2}\}$.
\end{lemma}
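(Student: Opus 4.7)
The plan is to prove coercivity by starting from $A(v,v)=A_d(v,v)+(\nabla_w^b v,v_0)_{\mathcal{T}_N}+(cv_0,v_0)_{\mathcal{T}_N}+s_c(v,v)$ and transforming the convection block so that what remains matches each summand of $\3bar v\3bar^2$ up to a constant. The term $A_d(v,v)$ is already the first summand of the energy norm, so I can leave it alone; the whole task is to show that the convection-plus-reaction contribution dominates $\gamma\|v_0\|_{\mathcal{T}_N}^2$ plus the appropriate jump term.

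The first step is to test the definition of $\nabla_w^b v$ in (\ref{3.2}) with $\varphi=v_0$, giving
\begin{equation*}
(\nabla_w^b v,v_0)_T=-(v_0,\nabla\cdot(\mathbf{b}v_0))_T+\langle v_b,\mathbf{b}\cdot\mathbf{n}\,v_0\rangle_{\partial T}.
\end{equation*}
Expanding $\nabla\cdot(\mathbf{b}v_0)=v_0\nabla\cdot\mathbf{b}+\mathbf{b}\cdot\nabla v_0$ and then using the elementary identity $(v_0,\mathbf{b}\cdot\nabla v_0)_T=\tfrac{1}{2}\langle\mathbf{b}\cdot\mathbf{n},v_0^2\rangle_{\partial T}-\tfrac{1}{2}(v_0^2,\nabla\cdot\mathbf{b})_T$ (integration by parts applied to $\tfrac{1}{2}\nabla v_0^2$), I will reduce the bulk part of $(\nabla_w^b v,v_0)_T+(cv_0,v_0)_T$ to $\big((c+\tfrac{1}{2}\nabla\cdot\mathbf{b})v_0,v_0\big)_T$ plus boundary terms of the form $\langle\mathbf{b}\cdot\mathbf{n},v_0 v_b-\tfrac{1}{2}v_0^2\rangle_{\partial T}$. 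Assumption (\ref{1.3}) then gives the bulk lower bound $\gamma\|v_0\|_{\mathcal{T}_N}^2$.

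The second step is the algebraic identity $v_0 v_b-\tfrac{1}{2}v_0^2=-\tfrac{1}{2}(v_0-v_b)^2+\tfrac{1}{2}v_b^2$. Summing over $T$, the $v_b^2$ piece cancels on every interior edge because $v_b$ is single-valued while $\mathbf{n}$ flips, and it vanishes on $\partial\Omega$ because $v\in V_N^0$. This leaves $-\tfrac{1}{2}\sum_T\langle\mathbf{b}\cdot\mathbf{n},(v_0-v_b)^2\rangle_{\partial T}$, which I split along $\partial_+T$ (where $\mathbf{b}\cdot\mathbf{n}\le 0$) and $\partial_-T$: the inflow piece contributes $+\tfrac{1}{2}\langle|\mathbf{b}\cdot\mathbf{n}|,(v_0-v_b)^2\rangle_{\partial_+T}$, and the outflow piece $-\tfrac{1}{2}\langle|\mathbf{b}\cdot\mathbf{n}|,(v_0-v_b)^2\rangle_{\partial_-T}$. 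Then I add the convection stabilizer $s_c(v,v)=\sum_T\langle|\mathbf{b}\cdot\mathbf{n}|,(v_0-v_b)^2\rangle_{\partial_+T}$, whose role is exactly to turn the negative outflow piece harmless: after reorganizing the $\partial_-$ contribution of each element with the $\partial_+$ contribution of its neighbor across each interior edge, one gets at least $\tfrac{1}{2}\sum_T\langle|\mathbf{b}\cdot\mathbf{n}|,(v_0-v_b)^2\rangle_{\partial T}$.

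Combining everything yields $A(v,v)\ge A_d(v,v)+\gamma\|v_0\|_{\mathcal{T}_N}^2+\tfrac{1}{2}\sum_T\|\,|\mathbf{b}\cdot\mathbf{n}|^{1/2}(v_0-v_b)\|_{\partial T}^2$, and taking $\alpha=\min\{\gamma,\tfrac{1}{2}\}$ finishes the proof. The main obstacle I anticipate is the bookkeeping in the second step: tracking signs across $\partial_+T$ versus $\partial_-T$ and showing that the single inflow stabilizer $s_c$ really does dominate the residual negative outflow contribution; here the fact that every interior outflow face of one element is an inflow face of the adjacent element (so $s_c$ accounts for it from the neighbor's side) is the pivotal observation, and the boundary condition $v_b=0$ on $\partial\Omega$ together with the nonnegativity of $\mathbf{b}\cdot\mathbf{n}$ guaranteed by (\ref{1.3}) on the outflow part of $\partial\Omega$ handle the boundary faces.
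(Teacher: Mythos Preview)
Your overall strategy coincides with the paper's: test (\ref{3.2}) with $\varphi=v_0$, integrate by parts to isolate the bulk term $\big((c+\tfrac12\nabla\cdot\mathbf{b})v_0,v_0\big)$, collapse the boundary residue to a multiple of $\langle\mathbf{b}\cdot\mathbf{n},(v_0-v_b)^2\rangle$ via the single-valuedness of $v_b$, and then add $s_c$. Two points, however, need correction.

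First, a sign issue. With $A_c=+(\nabla_w^{b} v,v_0)+\cdots$ as displayed in the algorithm, the computation you outline actually produces $\big((c-\tfrac12\nabla\cdot\mathbf{b})v_0,v_0\big)$, so assumption (\ref{1.3}) cannot be invoked. The paper's proof in fact works with $-(\nabla_w^{b} v,v_0)$ (the displayed sign in $A_c$ is a typo; the error-equation derivation in Lemma~4.1 uses the minus sign as well). With that correct sign the bulk term is $\big((c+\tfrac12\nabla\cdot\mathbf{b})v_0,v_0\big)$ and the boundary residue appears with coefficient $+\tfrac12$, not $-\tfrac12$.

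Second, and more substantively, your proposed ``reorganize the outflow contribution of $T$ with the inflow stabilizer of its neighbor'' step is both unnecessary and, as stated, does not deliver the claimed bound: on an interior edge the two interior traces $v_0|_{T}$ and $v_0|_{T'}$ are different, so the inflow stabilizer $\langle|\mathbf{b}\cdot\mathbf{n}|,(v_0|_{T'}-v_b)^2\rangle_e$ coming from $T'$ cannot dominate the outflow deficit $-\tfrac12\langle|\mathbf{b}\cdot\mathbf{n}|,(v_0|_T-v_b)^2\rangle_e$ from $T$. The paper avoids this entirely. With the correct sign one has, element by element,
\[
\tfrac12\langle\mathbf{b}\cdot\mathbf{n},(v_0-v_b)^2\rangle_{\partial T}
\;+\;\langle-\mathbf{b}\cdot\mathbf{n},(v_0-v_b)^2\rangle_{\partial_+T}
\;=\;\tfrac12\langle|\mathbf{b}\cdot\mathbf{n}|,(v_0-v_b)^2\rangle_{\partial T},
\]
an exact identity obtained simply by splitting $\partial T$ into $\partial_+T$ (where $\mathbf{b}\cdot\mathbf{n}\le0$) and its complement. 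No cross-element pairing is required, and the coercivity constant $\alpha=\min\{\gamma,\tfrac12\}$ follows immediately.
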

\begin{proof}
  For any $v \in V_{N}^{0}$, it follows from the definition of the weak divergence (\ref{3.2}) that
  \begin{align} \label{3.6}
    \begin{aligned}
    (\nabla_{w}^{b}v, v_{0})_{\mathcal{T}_{N}} &= (v_{0}, \nabla \cdot (\mathbf{b} v_{0}))_{\mathcal{T}_{N}} + \langle v_{b}, \mathbf{b} \cdot \mathbf{n} v_{0} \rangle_{\partial \mathcal{T}_{N}}\\
    &= -\frac{1}{2} (v_{0}, \nabla \cdot (\mathbf{b} v_{0}))_{\mathcal{T}_{N}} - \frac{1}{2} (v_{0}, \nabla \cdot (\mathbf{b} v_{0}))_{\mathcal{T}_{N}} + \langle v_{b}, \mathbf{b} \cdot \mathbf{n} v_{0} \rangle_{\partial \mathcal{T}_{N}}\\
    &= -\frac{1}{2} (v_{0}, \nabla \cdot (\mathbf{b} v_{0}))_{\mathcal{T}_{N}} + \frac{1}{2} (\mathbf{b} \cdot \nabla v_{0}, v_{0})_{\mathcal{T}_{N}} - \frac{1}{2} \langle v_{0}, \mathbf{b} \cdot \mathbf{n} v_{0} \rangle_{\partial \mathcal{T}_{N}} + \langle v_{b}, \mathbf{b} \cdot \mathbf{n} v_{0} \rangle_{\partial \mathcal{T}_{N}}\\
    &= -\frac{1}{2} (v_{0}, \nabla \cdot \mathbf{b} v_{0})_{\mathcal{T}_{N}} - \frac{1}{2} \langle v_{0} - v_{b}, \mathbf{b} \cdot \mathbf{n} v_{0} \rangle_{\partial \mathcal{T}_{N}} + \frac{1}{2} \langle v_{b}, \mathbf{b} \cdot \mathbf{n} (v_{0} - v_{b}) \rangle_{\partial \mathcal{T}_{N}}\\
    &= -\frac{1}{2} (v_{0}, \nabla \cdot \mathbf{b} v_{0})_{\mathcal{T}_{N}} - \frac{1}{2} \langle \mathbf{b} \cdot \mathbf{n} (v_{0} - v_{b}), v_{0} - v_{b} \rangle_{\partial \mathcal{T}_{N}}
    \end{aligned}
  \end{align}
  Then, with together (\ref{1.3}) and (\ref{3.6}) we have
  \begin{align*}
    A_{c}(v, v) &= -(\nabla_{w}^{b} v, v_{0})_{\mathcal{T}_{N}} + (c v_{0}, v_{0})_{\mathcal{T}_{N}} + \langle - \mathbf{b} \cdot \mathbf{n} (v_{0} - v_{b}), v_{0} - v_{b} \rangle_{\partial_{+} \mathcal{T}_{N}} \\
    &= \left((c+\frac{1}{2} \nabla \cdot \mathbf{b}) v_{0}, v_{0} \right)_{\mathcal{T}_{N}} + \frac{1}{2} \langle \mathbf{b} \cdot \mathbf{n} (v_{0} - v_{b}), v_{0} - v_{b} \rangle_{\partial \mathcal{T}_{N}} + \langle - \mathbf{b} \cdot \mathbf{n} (v_{0} - v_{b}), v_{0} - v_{b} \rangle_{\partial_{+} \mathcal{T}_{N}} \\
    &\geq (\gamma v_{0}, v_{0})_{\mathcal{T}_{N}} + \frac{1}{2} \langle - \mathbf{b} \cdot \mathbf{n} (v_{0} - v_{b}), v_{0} - v_{b} \rangle_{\partial_{+} \mathcal{T}_{N}} + \frac{1}{2} \langle \mathbf{b} \cdot \mathbf{n} (v_{0} - v_{b}), v_{0} - v_{b} \rangle_{\partial \mathcal{T}_{N} \backslash \partial_{+} \mathcal{T}_{N}}\\
    &\geq (\gamma v_{0}, v_{0})_{\mathcal{T}_{N}} + \frac{1}{2} \langle \vert \mathbf{b} \cdot \mathbf{n} \vert (v_{0} - v_{b}), v_{0} - v_{b} \rangle_{\partial \mathcal{T}_{N}}
  \end{align*}
  The above inequality and $A_{d}(v,v) = a(v, v) +s_{d}(v, v)$ yield
  \begin{align*}
    A(v, v) \geq \alpha \3bar v \3bar^{2},
  \end{align*}
  where $\alpha = \min\{\gamma, \frac{1}{2}\}$. The lemma is proved.
\end{proof}

Now, we introduce some $L^{2}$ projection operators for each element $T \in \mathcal{T}_N$ detailed as follows. Let operator $\mathcal{Q}_0$ projects onto $\mathbb{Q}_k(T)$. For each edge $e \in \partial T$, consider the operator $\mathcal{Q}_b$ onto $\mathbb{P}_k(e)$. Finally, we define a projection operator $\mathcal{Q}_N u=\left\{\mathcal{Q}_0 u, \mathcal{Q}_b u \right\}$ for solution $u$ onto the space $V_N$ on each element $T$. Additionally, denote by $\mathbf{Q}_{N}$ the projection operator onto $[\mathbb{Q}_{k}(T)]^{2}$. The following lemma is commutative property of weak gradient.

\begin{lemma} \label{lemma3.2}
  On each element $T \in \mathcal{T}_{N}$, for any $v \in H^2(T)$,
  \begin{align}\label{3.7}
    \nabla_{w}(\mathcal{Q}_{N} u)=\mathbf{Q}_{N}(\nabla u).
  \end{align}
\end{lemma}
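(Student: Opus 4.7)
The plan is to start from the definition of the discrete weak gradient applied to $\mathcal{Q}_N u = \{\mathcal{Q}_0 u, \mathcal{Q}_b u\}$, and show that for an arbitrary test function $\mathbf{q} \in [\mathbb{Q}_k(T)]^2$ the pairing $(\nabla_w(\mathcal{Q}_N u), \mathbf{q})_T$ equals $(\mathbf{Q}_N(\nabla u), \mathbf{q})_T$. Since both sides live in $[\mathbb{Q}_k(T)]^2$, equality of the pairings against the full test space will force the identity.

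First I would expand the definition (\ref{3.1}):
\begin{equation*}
(\nabla_w(\mathcal{Q}_N u), \mathbf{q})_T = -(\mathcal{Q}_0 u, \nabla\cdot \mathbf{q})_T + \langle \mathcal{Q}_b u, \mathbf{q}\cdot\mathbf{n}\rangle_{\partial T}.
\end{equation*}
The key polynomial-degree check is that $\nabla\cdot\mathbf{q}\in\mathbb{Q}_k(T)$ when $\mathbf{q}\in[\mathbb{Q}_k(T)]^2$ on a rectangle (each partial derivative reduces the degree in one variable by one, so it remains in $\mathbb{Q}_k$), and that on each edge $e$ of the axis-aligned rectangle $T$, $\mathbf{q}\cdot\mathbf{n}|_e\in\mathbb{P}_k(e)$. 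These two facts let me swap $\mathcal{Q}_0 u$ for $u$ and $\mathcal{Q}_b u$ for $u$ by the defining $L^2$-projection property, obtaining
\begin{equation*}
(\nabla_w(\mathcal{Q}_N u), \mathbf{q})_T = -(u, \nabla\cdot \mathbf{q})_T + \langle u, \mathbf{q}\cdot\mathbf{n}\rangle_{\partial T}.
\end{equation*}

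Next I would invoke the integration-by-parts formula on $T$, valid because $u\in H^2(T)\subset H^1(T)$:
\begin{equation*}
-(u, \nabla\cdot \mathbf{q})_T + \langle u, \mathbf{q}\cdot\mathbf{n}\rangle_{\partial T} = (\nabla u, \mathbf{q})_T.
\end{equation*}
Finally, since $\mathbf{q}\in[\mathbb{Q}_k(T)]^2$, the defining property of the vector projection $\mathbf{Q}_N$ gives
\begin{equation*}
(\nabla u, \mathbf{q})_T = (\mathbf{Q}_N(\nabla u), \mathbf{q})_T.
\end{equation*}
Chaining these three equalities yields $(\nabla_w(\mathcal{Q}_N u) - \mathbf{Q}_N(\nabla u), \mathbf{q})_T = 0$ for every $\mathbf{q}\in [\mathbb{Q}_k(T)]^2$, and since $\nabla_w(\mathcal{Q}_N u)$ also lies in this space by definition, testing against itself gives the identity (\ref{3.7}).

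The only real obstacle is the polynomial-degree verification on the rectangular element $T$, which is the reason the weak gradient is defined with test space $[\mathbb{Q}_k(T)]^2$ rather than some smaller space: one needs $\mathbb{Q}_k$ test vectors so that the $L^2$ projection onto $\mathbb{Q}_k$ sees $\nabla\cdot\mathbf{q}$ exactly, while $\mathbf{q}\cdot\mathbf{n}$ on any edge still fits in $\mathbb{P}_k(e)$ because $\mathbf{n}$ is axis-aligned. After that check, the proof is essentially just a single integration by parts combined with orthogonality of the projections.
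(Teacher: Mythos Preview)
Your proof is correct and follows exactly the same route as the paper: apply the weak-gradient definition to $\mathcal{Q}_N u$, use the $L^2$-projection properties of $\mathcal{Q}_0$ and $\mathcal{Q}_b$ to drop the projections, integrate by parts, and then invoke the defining property of $\mathbf{Q}_N$. You even make explicit the polynomial-degree checks ($\nabla\cdot\mathbf{q}\in\mathbb{Q}_k(T)$ and $\mathbf{q}\cdot\mathbf{n}|_e\in\mathbb{P}_k(e)$) that the paper uses silently.
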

\begin{proof}
  For any $\mathbf{q} \in [\mathbb{Q}_{k}(T)]^2$, from the weak gradient definition (\ref{3.1}), we get
  \begin{align*}
    \left(\nabla_{w}\mathcal{Q}_{N} u,\mathbf{q}\right)_{T}&=-\left(\mathcal{Q}_{0}u,\nabla\cdot \mathbf{q}\right)_{T}+\left\langle\mathcal{Q}_{b} u,\mathbf{q}\cdot\mathbf{n}\right\rangle_{\partial T}\\
    &=-\left(u,\nabla\cdot \mathbf{q}\right)_{T}+\left\langle u, \mathbf{q}\cdot\mathbf{n}\right\rangle_{\partial T}\\
    &=\left(\nabla u, \mathbf{q}\right)_{T}\\
    &=\left(\mathbf{Q}_{N}\nabla u, \mathbf{q}\right)_{T}.
  \end{align*}
  This corresponds to equality (\ref{3.7}).
\end{proof}

\section{Error estimate}
In this section, we will obtain an error estimate for the WG finite element approximation $u_{N}$. We next derive the following error equation that the error satisfies.
\begin{lemma}
 Suppose that $u$ is the solutions of (\ref{1.1})-(\ref{1.2}) and $u_{N}$ is the solutions of (\ref{3.3}).  Denote $e_{N} = \mathcal{Q}_{N} u -  u_{N}$. Then for any $v \in V_{N}^{0}$, we have
  \begin{align} \label{4.1}
    A(e_{N},v)=l_{1}(u,v)-l_{2}(u,v)-l_{3}(u,v)+s_d(\mathcal{Q}_{N}u,v)+s_c(\mathcal{Q}_{N}u,v),
\end{align}
where
\begin{align*}
  &l_{1}(u,v)=\left( u-\mathcal{Q}_{0} u, \nabla \cdot (\mathbf{b} v_{0}) \right)_{\mathcal{T}_{h}},\\
  &l_{2}(u,v)=\left\langle u-\mathcal{Q}_{b} u, \mathbf{b} \cdot \mathbf{n} (v_0-v_b) \right\rangle_{\partial \mathcal{T}_{h}},\\
  &l_{3}(u,v)=\varepsilon \left\langle \left(\nabla u-\mathbf{Q}_{h} \nabla u \right) \cdot \mathbf{n}, v_0-v_b\right\rangle_{\partial \mathcal{T}_{h}}.
\end{align*}
\end{lemma}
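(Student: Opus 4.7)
The plan is to start from $A(e_N, v) = A(\mathcal{Q}_N u, v) - A(u_N, v) = A(\mathcal{Q}_N u, v) - (f, v_0)_{\mathcal{T}_N}$, which follows directly from (\ref{3.3}), and then expand $(f, v_0)_T = -\varepsilon(\Delta u, v_0)_T - (\mathbf{b} \cdot \nabla u, v_0)_T + (cu, v_0)_T$ elementwise via the strong form (\ref{1.1}). The three residuals $l_1, l_2, l_3$ together with the two stabilizers will then be identified by expanding $A(\mathcal{Q}_N u, v)$ piece by piece and matching it against this strong-form expansion of $(f, v_0)$.

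For the diffusion bilinear form I would first invoke Lemma \ref{lemma3.2} to replace $\nabla_w(\mathcal{Q}_N u)$ by $\mathbf{Q}_{N} \nabla u$, then apply definition (\ref{3.1}) to $\nabla_w v$ with the admissible test function $\mathbf{q} = \mathbf{Q}_{N} \nabla u$ to obtain
\begin{align*}
(\mathbf{Q}_{N} \nabla u, \nabla_w v)_T = -(v_0, \nabla \cdot \mathbf{Q}_{N} \nabla u)_T + \langle v_b, \mathbf{Q}_{N} \nabla u \cdot \mathbf{n}\rangle_{\partial T}.
\end{align*}
Because $\nabla v_0 \in [\mathbb{Q}_k(T)]^2$, the $L^2$-orthogonality $(\nabla v_0, \nabla u - \mathbf{Q}_{N} \nabla u)_T = 0$ holds, which after elementwise integration by parts collapses the volume piece to $-(\Delta u, v_0)_T$ modulo an interface remainder $\langle v_0, (\nabla u - \mathbf{Q}_{N} \nabla u) \cdot \mathbf{n}\rangle_{\partial T}$. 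Summing over $T$, the stray $\langle v_b, \nabla u \cdot \mathbf{n}\rangle_{\partial T}$ piece telescopes to zero via continuity of $\nabla u \cdot \mathbf{n}$ across interior edges, single-valuedness of $v_b$, and $v_b|_{\partial\Omega} = 0$. After attaching the factor $\varepsilon$, the diffusion part of $A(\mathcal{Q}_N u, v)$ matches $-\varepsilon(\Delta u, v_0)_{\mathcal{T}_N}$ up to $l_3(u, v)$ and the stabilizer $s_d(\mathcal{Q}_N u, v)$.

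For the convection bilinear form I would substitute $\varphi = v_0$ in definition (\ref{3.2}) to get
\begin{align*}
(\nabla_{w}^{b} \mathcal{Q}_N u, v_0)_T = -(\mathcal{Q}_0 u, \nabla \cdot (\mathbf{b} v_0))_T + \langle \mathcal{Q}_b u, \mathbf{b} \cdot \mathbf{n} v_0\rangle_{\partial T}.
\end{align*}
Writing $\mathcal{Q}_0 u = u - (u - \mathcal{Q}_0 u)$, $\mathcal{Q}_b u = u - (u - \mathcal{Q}_b u)$, and using elementwise integration by parts $-(u, \nabla \cdot (\mathbf{b} v_0))_T = (\mathbf{b} \cdot \nabla u, v_0)_T - \langle u, \mathbf{b} \cdot \mathbf{n} v_0\rangle_{\partial T}$, this rearranges to $(\mathbf{b} \cdot \nabla u, v_0)_T + (u - \mathcal{Q}_0 u, \nabla \cdot (\mathbf{b} v_0))_T - \langle u - \mathcal{Q}_b u, \mathbf{b} \cdot \mathbf{n} v_0\rangle_{\partial T}$. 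Since $u$ is continuous, $\mathcal{Q}_b u$ is single-valued per edge, and $v_b$ is single-valued with $v_b|_{\partial\Omega} = 0$, the quantity $\sum_T \langle u - \mathcal{Q}_b u, \mathbf{b} \cdot \mathbf{n} v_b\rangle_{\partial T}$ vanishes identically; subtracting it for free turns the edge remainder into one involving $v_0 - v_b$, matching $l_2(u, v)$, while the volume remainder is exactly $l_1(u, v)$. Together with the reaction term $(c\mathcal{Q}_0 u - cu, v_0)_{\mathcal{T}_N}$ (handled by $L^2$-projection orthogonality) and the stabilizer $s_c(\mathcal{Q}_N u, v)$, this yields the convection/reaction contribution to (\ref{4.1}).

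The principal obstacle is the careful bookkeeping of element-boundary contributions: one must cleanly separate the $v_b$-parts that globally telescope (using continuity of $u$ and $\nabla u \cdot \mathbf{n}$, single-valuedness of $\mathcal{Q}_b u$ and $v_b$, and the homogeneous boundary condition) from the $v_0 - v_b$ jumps that survive as $l_2$ and $l_3$, and keep the signs aligned with the upwind flux convention on $\partial_{+} T$ appearing in $s_c$. Once this bookkeeping is organized line by line, identity (\ref{4.1}) follows by direct collection of terms.
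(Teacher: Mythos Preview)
Your proposal is correct and follows essentially the same route as the paper: both use Lemma~\ref{lemma3.2} together with definition~(\ref{3.1}) (tested against $\mathbf{Q}_N\nabla u$) and integration by parts for the diffusion part, definition~(\ref{3.2}) plus insertion of $u$ for the convection part, and the telescoping identities $\langle \nabla u\cdot\mathbf{n},v_b\rangle_{\partial\mathcal{T}_N}=0$, $\langle u-\mathcal{Q}_b u,\mathbf{b}\cdot\mathbf{n}\,v_b\rangle_{\partial\mathcal{T}_N}=0$ to convert boundary remainders into $v_0-v_b$ jumps. The only cosmetic difference is that the paper computes $A(\mathcal{Q}_N u,v)$ first and then subtracts the discrete equation, whereas you start from $A(e_N,v)=A(\mathcal{Q}_N u,v)-(f,v_0)$; the algebra is identical.
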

\begin{proof}
  Using (\ref{3.1}), (\ref{3.7}) and integration by parts, we obtain 
  \begin{align*}
    \left(\nabla_{w}\mathcal{Q}_{N}u, \nabla_{w}v\right)_{\mathcal{T}_{N}} &= \left(\mathbf{Q}_{N} \nabla u, \nabla_{w} v \right)_{\mathcal{T}_{N}}\\
    &= -\left(v_0, \nabla\cdot (\mathbf{Q}_{N} \nabla u)\right)_{\mathcal{T}_{N}} + \left\langle v_b, (\mathbf{Q}_{N} \nabla u) \cdot \mathbf{n}\right\rangle_{\partial \mathcal{T}_{N}}\\
    &=\left(\nabla v_{0}, \mathbf{Q}_{N} \nabla u\right)_{\mathcal{T}_{N}} - \left\langle v_0-v_b, (\mathbf{Q}_{N} \nabla u) \cdot \mathbf{n}\right\rangle_{\partial \mathcal{T}_{N}}\\
    &=\left(\nabla u, \nabla v_{0}\right)_{\mathcal{T}_{N}} - \left\langle v_0-v_b, (\mathbf{Q}_{N} \nabla u) \cdot\mathbf{n}\right\rangle_{\partial \mathcal{T}_{N}}\\
    &=(-\Delta u, v_{0})_{\mathcal{T}_{N}} + \langle \nabla u \cdot \mathbf{n}, v_{0} \rangle_{\partial \mathcal{T}_{N}} - \left\langle v_0-v_b, (\mathbf{Q}_{N} \nabla u) \cdot\mathbf{n}\right\rangle_{\partial \mathcal{T}_{N}} - \langle \nabla u \cdot \mathbf{n}, v_{b} \rangle_{\partial \mathcal{T}_{N}},\\
    &= (-\Delta u, v_{0})_{\mathcal{T}_{N}} - \left\langle (\mathbf{Q}_{N} - \nabla u) \cdot\mathbf{n}, v_0-v_b \right\rangle_{\partial \mathcal{T}_{N}}
\end{align*}
where we have used the fact that $\langle \nabla u \cdot \mathbf{n}, v_{b} \rangle_{\partial \mathcal{T}_{N}} = 0$. Similarly, from (\ref{3.2}) and integration by parts one has
\begin{align*}
  -(\nabla_{w}^{b} \mathcal{Q}_{N} u, v_{0})_{\mathcal{T}_{N}} &= (\mathcal{Q}_{0} u, \nabla \cdot (\mathbf{b} v_{0}))_{\mathcal{T}_{N}} - \langle \mathcal{Q}_{b} u, \mathbf{b} \cdot \mathbf{n} v_{0} \rangle_{\partial \mathcal{T}_{N}}\\
  &= (\mathcal{Q}_{0} u - u, \nabla \cdot (\mathbf{b} v_{0}))_{\mathcal{T}_{N}} + (u, \nabla \cdot (\mathbf{b} v_{0}))_{\mathcal{T}_{N}} - \langle \mathcal{Q}_{b} u, \mathbf{b} \cdot \mathbf{n} v_{0} \rangle_{\partial \mathcal{T}_{N}}\\
  &= (\mathcal{Q}_{0} u - u, \nabla \cdot (\mathbf{b} v_{0}))_{\mathcal{T}_{N}} - (\mathbf{b} \cdot \nabla u, v_{0})_{\mathcal{T}_{N}} - \langle \mathcal{Q}_{b} u - u, \mathbf{b} \cdot \mathbf{n} v_{0} \rangle_{\partial \mathcal{T}_{N}}\\
  &= -(\mathbf{b} \cdot \nabla u, v_{0})_{\mathcal{T}_{N}} + (\mathcal{Q}_{0} u - u, \nabla \cdot (\mathbf{b} v_{0}))_{\mathcal{T}_{N}} - \langle \mathcal{Q}_{b} u - u, \mathbf{b} \cdot \mathbf{n} (v_{0} - v_{b}) \rangle_{\partial \mathcal{T}_{N}}
\end{align*}
where we have used $\langle \mathcal{Q}_{b} u - u, \mathbf{b} \cdot \mathbf{n} v_{b} \rangle_{\partial \mathcal{T}_{N}}=0$. Also we have
\begin{align*}
  (c \mathcal{Q}_{N} u, v_{0})_{\mathcal{T}_{N}} = (c \mathcal{Q}_{0} u ,v_{0})_{\mathcal{T}_{N}} = (c u, v_{0})_{\mathcal{T}_{N}}.
\end{align*}
Notice that
\begin{align*}
  \varepsilon (\nabla_{w} u_{N}, \nabla_{w} v) - (\nabla_{w}^{b} u_{N}, v_{0}) + (c u_{N}, v_{0}) + s_{d}(u_{N}, v) + s_{c}(u_{N}, v) &= (f, v_{0}),
\end{align*}
and
\begin{align*}
  -\varepsilon (\Delta u, v_{0}) - (\mathbf{b} \cdot \nabla u, v_{0}) + (c u, v_{0}) &= (f, v_{0}).
\end{align*}
Combined with the above equality we yield
\begin{align*}
  \varepsilon (\nabla_{w}^{b} \mathcal{Q}_{N} u, \nabla_{w} v) - (\nabla_{w}^{b} \mathcal{Q}_{N} u, v_{0}) + (c \mathcal{Q}_{N} u, v_{0}) = (f, v_{0}) + l_{1}(u, v) - l_{2}(u, v) - l_{3}(u, v).
\end{align*}
By adding $s_{d}(\mathcal{Q}_{N}u, v)$ and $s_{c}(\mathcal{Q}_{N}u, v)$ to both sides of the above equation, we get
\begin{align*}
  A(\mathcal{Q}_{N} u - u_N, v) = l_{1}(u, v) - l_{2}(u, v) - l_{3}(u, v) + s_{d}(\mathcal{Q}_{N}u, v) +s_{c}(\mathcal{Q}_{N}u, v).
\end{align*}
The lemma is proved.
\end{proof}

\begin{theorem}
  Let $\sigma \geq k+1$. Suppose that $u \in H^{k+1}(\Omega)$ and $u_{N}$ be the solutions of (\ref{1.1})-(\ref{1.2}) and $(\ref{3.3})$, respectively. Then one has
  \begin{equation} \label{4.2}
    \3bar \mathcal{Q}_{N} u - u_{N} \3bar \leq C N^{-k}.
  \end{equation}
\end{theorem}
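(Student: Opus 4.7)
The plan is to combine the coercivity estimate (3.5) with the error equation (4.1) tested against $v = e_N$, and then to bound each of the five terms on the right-hand side of (4.1) by $C N^{-k} \3bar e_N \3bar$. From $\alpha \3bar e_N \3bar^{2} \leq A(e_N, e_N) = l_1(u, e_N) - l_2(u, e_N) - l_3(u, e_N) + s_d(\mathcal{Q}_N u, e_N) + s_c(\mathcal{Q}_N u, e_N)$, an estimate of the form $|\mathrm{RHS}| \leq C N^{-k} \3bar e_N \3bar$ yields the theorem after dividing by $\3bar e_N \3bar$.

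For each of the five terms, my strategy is to use Cauchy--Schwarz to split off a factor that is controllable by $\3bar e_N \3bar$ (the $v_0$ mass, the jump $v_0 - v_b$ weighted by $\vartheta_T^{1/2}$, or the convective jump weighted by $|\mathbf{b}\cdot\mathbf{n}|^{1/2}$), and then to estimate the remaining factor involving $u - \mathcal{Q}_0 u$, $u - \mathcal{Q}_b u$, and $\nabla u - \mathbf{Q}_N \nabla u$ by inserting Assumption 2.1's decomposition $u = S + E_1 + E_2 + E_{12}$. On each subregion I would invoke standard anisotropic $L^2$-projection estimates together with trace inequalities, the factor $\vartheta_T^{-1/2}$ being absorbed into the approximation bound. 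In $\Omega_0$, where $u$ is essentially smooth because the layer components are of order $e^{-\beta_i x_{N/2-1}/\varepsilon} \leq C N^{-\sigma} \leq C N^{-(k+1)}$, the mesh is quasi-uniform with $h \sim N^{-1}$ and one obtains $O(N^{-(k+1)})$ directly; in $\Omega_1, \Omega_2, \Omega_{12}$ the fine mesh properties of Lemma 2.1, particularly $h_1 \sim \varepsilon N^{-1}$ and the crucial inequality $h_i^{\rho} e^{-\beta_1 x/\varepsilon} \leq C \varepsilon^{\rho} N^{-\rho}$, turn the $\varepsilon^{-j}$ blow-up of derivatives of the layer components into harmless $N^{-k}$ factors. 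The stabilizer terms $s_d(\mathcal{Q}_N u, \cdot)$ and $s_c(\mathcal{Q}_N u, \cdot)$ are handled in the same way, since the jump $\mathcal{Q}_0 u - \mathcal{Q}_b u$ is a projection-type error whose square is dual to the $\vartheta_T$-weighted and $|\mathbf{b}\cdot\mathbf{n}|$-weighted pieces of the energy norm.

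The main obstacle, as I see it, is the layer terms in the regions $\Omega_1$ and $\Omega_2$ where the mesh is anisotropic (fine in one direction, coarse in the other). There the convective term $l_1(u, v) = (u - \mathcal{Q}_0 u, \nabla\cdot(\mathbf{b} v_0))$ and the stabilizer $s_d$ have an asymmetric interaction with the piecewise-defined weight $\vartheta_T$ (which equals $\varepsilon h_y^{-1}$ on $x$-edges and $\varepsilon h_x^{-1}$ on $y$-edges in $\Omega \setminus \Omega_0$, but $N$ inside $\Omega_0$), and one must choose the direction of the anisotropic projection estimate compatibly with the exponential factor that is actually present in that subregion so that $\varepsilon^{\rho}$ cancels against $\vartheta_T^{\pm 1/2}$ correctly. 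The boundary-interface edges where $\vartheta_T$ changes from $\varepsilon h^{-1}$ to $N$ need a separate, careful treatment in both $l_2$, $l_3$ and the two stabilizers; this is where the extra $\sigma \geq k+1$ assumption is actually used in full strength. Once these bookkeeping issues are resolved region by region, summation over $T \in \mathcal{T}_N$ and an application of Cauchy--Schwarz in $\ell^2$ assemble the uniform bound $C N^{-k} \3bar e_N \3bar$.
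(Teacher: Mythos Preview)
Your overall architecture is the paper's: set $v=e_N$ in (4.1), invoke the coercivity (3.5), and bound each of the five terms by $CN^{-k}\3bar e_N\3bar$ via Cauchy--Schwarz, the projection estimates in the Appendix, and the region-by-region decomposition from Assumption~2.1. For $l_2$, $l_3$, $s_d$, $s_c$ your plan is exactly right and matches the paper's use of (A.8)--(A.10).

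There is, however, a genuine gap in your treatment of $l_1(u,e_N)=(u-\mathcal{Q}_0 u,\nabla\cdot(\mathbf{b}\,e_0))$. You say you will ``split off a factor controllable by $\3bar e_N\3bar$ (the $v_0$ mass \dots)'', but a direct Cauchy--Schwarz leaves you with $\|\nabla\cdot(\mathbf{b}\,e_0)\|$, which contains $\|\mathbf{b}\cdot\nabla e_0\|$. The energy norm only controls $\varepsilon^{1/2}\|\nabla_w e_N\|$ and the stabilized jumps, so the best you get this way is $\|\nabla e_0\|\leq C\varepsilon^{-1/2}\3bar e_N\3bar$, and the resulting bound $CN^{-(k+1)}\varepsilon^{-1/2}\3bar e_N\3bar$ is \emph{not} uniform in~$\varepsilon$. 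The anisotropic-region bookkeeping you describe does not fix this; it is a structural issue, not a mesh issue. The paper resolves it by writing $\mathbf{b}\cdot\nabla e_0=(b_1-\overline{b_1})\partial_x e_0+(b_2-\overline{b_2})\partial_y e_0+\overline{b_1}\partial_x e_0+\overline{b_2}\partial_y e_0$, where $\overline{b_i}$ is a piecewise constant average. Since $\overline{b_i}\,\partial_i e_0\in\mathbb{Q}_k(T)$, orthogonality of $\mathcal{Q}_0$ kills those pieces, while $|b_i-\overline{b_i}|\leq Ch_i$ together with the inverse estimate (A.6) turns the remaining pieces into $C\|u-\mathcal{Q}_0 u\|\,\|e_0\|$, after which (A.7) gives $CN^{-(k+1)}\3bar e_N\3bar$.

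A minor point: your concern about ``boundary-interface edges where $\vartheta_T$ changes'' is overstated. In the paper there is no separate interface argument; the estimates (A.7)--(A.10) are proved region by region with the layer decomposition, and the change in $\vartheta_T$ is absorbed because on each element the weight is compatible with the local mesh sizes and the local exponential decay via Lemma~2.1.
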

\begin{proof}
  Let $v = e_{N}$ in the error equation (\ref{4.1}), we derive the following equation
  \begin{align*}
    A(e_{N}, e_{N}) = l_{1}(u, e_{N}) - l_{2}(u, e_{N}) - l_{3}(u, e_{N}) + s_{d}(\mathcal{Q}_{N}u, e_{N}) +s_{c}(\mathcal{Q}_{N}u, e_{N}). 
  \end{align*}

  Using the Cauchy-Schwarz inequality and (\ref{A.7}), we obtain 
  \begin{align*}
    \begin{aligned}
      l_{1}(u, e_{N}) &= \left( u-\mathcal{Q}_{0} u, \nabla \cdot (\mathbf{b} e_{0}) \right)_{\mathcal{T}_{h}}\\
      &= \left( \mathcal{Q}_{0} u - u, \nabla \cdot \mathbf{b} e_{0} \right)_{\mathcal{T}_{h}} + \left( \mathcal{Q}_{0} u - u, \mathbf{b} \cdot \nabla e_{0} \right)_{\mathcal{T}_{h}}\\
      &\leq C \left\Vert \mathcal{Q}_{0} u - u \right\Vert_{\mathcal{T}_{h}} \left\Vert e_{0} \right\Vert_{\mathcal{T}_{h}} + \left( \mathcal{Q}_{0} u - u, b_{1} \partial_{x} e_{0} + b_{2} \partial_{y} e_{0} \right)_{\mathcal{T}_{h}}\\
      &\leq C \left\Vert \mathcal{Q}_{0} u - u \right\Vert_{\mathcal{T}_{h}} \left\Vert e_{0} \right\Vert_{\mathcal{T}_{h}} + \left( \mathcal{Q}_{0} u - u, \left( b_{1} - \overline{b_{1}} \right) \partial_{x} e_{0} + \left( b_{2} - \overline{b_{2}} \right) \partial_{y} e_{0} \right)_{\mathcal{T}_{h}}\\
      &\leq C \left\Vert \mathcal{Q}_{0} u - u \right\Vert_{\mathcal{T}_{h}} \left\Vert e_{0} \right\Vert_{\mathcal{T}_{h}}\\
      &\leq C N^{-(k+1)} \3bar e_{N} \3bar.
    \end{aligned}
  \end{align*}
  where $\overline{b_{1}}$ and $\overline{b_{2}}$ are piecewise constant functions whose value are $h_{x}^{-1} \int_{e} b_{1} \,dx $ and $h_{y}^{-1} \int_{e} b_{2} \,dy $, respectively. Similarly, it follows from the Cauchy-Schwarz inequality and (\ref{A.8}) that
  \begin{align*}
    \begin{aligned}
      l_{2}(u, e_{N}) &= \left\langle u-\mathcal{Q}_{b} u, \mathbf{b} \cdot \mathbf{n} (e_0-e_b) \right\rangle_{\partial \mathcal{T}_{h}}\\
      &\leq C \sum_{i=x, y} \sum_{T \in \mathcal{T}_{N}} \left\Vert u - \mathcal{Q}_{b} u \right\Vert_{\partial T_{i}} \left\Vert \left\vert \mathbf{b} \cdot \mathbf{n} \right\vert^{\frac{1}{2}} (e_0-e_b) \right\Vert_{\partial T_{i}}\\
      &\leq C \sum_{i=x, y} \sum_{T \in \mathcal{T}_{N}} \left\Vert u - \mathcal{Q}_{0} u \right\Vert_{\partial T_{i}} \left\Vert \left\vert \mathbf{b} \cdot \mathbf{n} \right\vert^{\frac{1}{2}} (e_0-e_b) \right\Vert_{\partial T_{i}}\\
      &\leq C \sum_{i=x, y} \left\Vert u - \mathcal{Q}_{0} u \right\Vert_{\partial \mathcal{T}_{Ni}} \left\Vert \left\vert \mathbf{b} \cdot \mathbf{n} \right\vert^{\frac{1}{2}} (e_0-e_b) \right\Vert_{\mathcal{T}_{Ni}}\\
      &\leq C N^{-(k+\frac{1}{2})} \3bar e_{N} \3bar.
    \end{aligned}
  \end{align*}
  From the Cauchy-Schwarz inequality and (\ref{A.9}), we yield
  \begin{align*}
    \begin{aligned}
      l_{3}(u, e_{N}) &= \varepsilon \left\langle \left(\nabla u-\mathbf{Q}_{h} \nabla u \right) \cdot \mathbf{n}, e_0 - e_b \right\rangle_{\partial \mathcal{T}_{h}}\\
      &\leq C \varepsilon \sum_{i=x, y} \sum_{T \in \mathcal{T}_{N}} \left\Vert \nabla u-\mathbf{Q}_{h} \nabla u \right\Vert_{\partial T_{i}} \left\Vert e_0-e_b \right\Vert_{\partial T_{i}}\\
      &\leq C \sum_{i=x, y} \left( \sum_{T \in \mathcal{T}_{N}} \theta_{T} \left\Vert \nabla u-\mathbf{Q}_{h} \nabla u \right\Vert_{\partial T_{i}}^{2} \right)^{\frac{1}{2}} \left( \sum_{T \in \mathcal{T}_{N}} \vartheta_{T} \left\Vert e_0-e_b \right\Vert_{\partial T_{i}}^{2} \right)^{\frac{1}{2}}\\
      &\leq C N^{-k} \3bar e_{N} \3bar.
    \end{aligned}
  \end{align*}
  On the other hand, it follows from the definition of stabilization, Cauchy-Schwarz inequality (\ref{A.10}) and (\ref{A.8}) that
  \begin{align*}
    s_{d}(\mathcal{Q}_{N}, e_{N}) &= \sum_{i=x, y} \sum_{T \in \mathcal{T}_N} \vartheta_{T} \left\langle \mathcal{Q}_{0} u - \mathcal{Q}_{b} u,e_0-e_b \right\rangle_{\partial T_{i}}\\
    &\leq C \sum_{i=x, y} \left( \sum_{T \in \mathcal{T}_N} \vartheta_{T} \left\Vert \mathcal{Q}_{0} u - u \right\Vert_{\partial T_{i}}^{2} \right)^{\frac{1}{2}} \left( \sum_{T \in \mathcal{T}_N} \vartheta_{T} \left\Vert e_0-e_b \right\Vert_{\partial T_{i}}^{2} \right)^{\frac{1}{2}}\\
    &\leq C N^{-k} \3bar e_{N} \3bar.
  \end{align*}
  and
  \begin{align*}
    s_{c}(\mathcal{Q}_{N}, e_{N}) &= \sum_{i=x, y} \sum_{T\in\mathcal{T}_N}  \left\langle -\mathbf{b} \cdot \mathbf{n} (\mathcal{Q}_{0} u - \mathcal{Q}_{b} u), v_0-v_b \right\rangle_{\partial_{+} T_{i}}\\
    &\leq C \sum_{i=x, y} \left( \sum_{T \in \mathcal{T}_N} \left\Vert \mathcal{Q}_{0} u - u \right\Vert_{\partial T_{i}}^{2} \right)^{\frac{1}{2}} \left( \sum_{T \in \mathcal{T}_N} \left\vert \mathbf{b} \cdot \mathbf{n} \right\vert^{\frac{1}{2}} \left\Vert e_0-e_b \right\Vert_{\partial T_{i}}^{2} \right)^{\frac{1}{2}}\\
    &\leq C N^{-(k+\frac{1}{2})} \3bar e_{N} \3bar.
  \end{align*}
  Combining all the estimates above with together (\ref{3.5}) that
  \begin{align*}
    \alpha \3bar e_{N} \3bar^{2} &\leq A(e_{N}, e_{N})\\
    &\leq \vert l_{1}(u, e_{N}) \vert + \vert l_{2}(u, e_{N}) \vert + \vert l_{3}(u, e_{N}) \vert + \vert s_{d}(\mathcal{Q}_{N}, e_{N}) \vert + \vert s_{c}(\mathcal{Q}_{N}, e_{N}) \vert\\
    &\leq C N^{-k} \3bar e_{N} \3bar.
  \end{align*}
  which implies (\ref{4.2}). This completes the proof of the theorem.
\end{proof}

\section{Numerical Experiments}
In this section we present numerical experiments that support our theoretical results, which solution exhibits typical exponential layer behavior. And we select $\sigma = 2k$ for creating the Bakhvalov-type mesh. 
\begin{example}\label{Ex5.1}
  \begin{align*}
    -\varepsilon \Delta u - (2 + 2x - y) \partial_{x} u - (3 - x + 2y) \partial_{y} u + u &= f, \quad \text{in}~\Omega = (0, 1)^{2},\\
    u&=0, \quad \text{on}~\partial\Omega,
  \end{align*}
  where $f(x, y)$ is chosen such that the exact solution
  $$u(x, y) = 2 \sin(\pi x) \left( 1 - e^{-\frac{2x}{\varepsilon}} \right) \left( 1 - y \right)^{2} \left( 1 - e^{-\frac{y}{\varepsilon}} \right).$$
  The error $\3bar e_{N} \3bar$ and convergence rates for several different $\varepsilon$ are shown in Table \ref{Ex1_k1}-\ref{Ex1_k2}.
\end{example}

\begin{table}[h]
  \centering
  \caption{Example \ref{Ex5.1}, k=1}
  \begin{tabular}{|c|c|c|c|c|c|c|c|c|c|c|}
    \hline
    N     & \multicolumn{2}{c|}{$\varepsilon$ = 1e-6} & \multicolumn{2}{c|}{$\varepsilon$ = 1e-7} & \multicolumn{2}{c|}{$\varepsilon$ = 1e-8} & \multicolumn{2}{c|}{$\varepsilon$ = 1e-9} & \multicolumn{2}{c|}{$\varepsilon$ = 1e-10} \bigstrut\\
    \hline
    8     & 3.66E-01 & 0.00  & 3.84E-01 & 0.00  & 3.98E-01 & 0.00  & 4.07E-01 & 0.00  & 4.14E-01 & 0.00  \bigstrut\\
    \hline
    16    & 1.73E-01 & 1.08  & 1.83E-01 & 1.07  & 1.91E-01 & 1.06  & 1.96E-01 & 1.05  & 2.00E-01 & 1.05  \bigstrut\\
    \hline
    32    & 8.24E-02 & 1.07  & 8.81E-02 & 1.05  & 9.23E-02 & 1.05  & 9.54E-02 & 1.04  & 9.76E-02 & 1.03  \bigstrut\\
    \hline
    64    & 3.95E-02 & 1.06  & 4.27E-02 & 1.04  & 4.51E-02 & 1.03  & 4.68E-02 & 1.03  & 4.80E-02 & 1.02  \bigstrut\\
    \hline
    128   & 1.90E-02 & 1.06  & 2.08E-02 & 1.04  & 2.21E-02 & 1.03  & 2.30E-02 & 1.02  & 2.37E-02 & 1.02  \bigstrut\\
    \hline
    256   & 9.14E-03 & 1.06  & 1.01E-02 & 1.04  & 1.08E-02 & 1.03  & 1.14E-02 & 1.02  & 1.17E-02 & 1.01  \bigstrut\\
    \hline
  \end{tabular}%
  \label{Ex1_k1}%
\end{table}%

\begin{table}[h]
  \centering
  \caption{Example \ref{Ex5.1}, k=2}
  \begin{tabular}{|c|c|c|c|c|c|c|c|c|c|c|}
    \hline
    N     & \multicolumn{2}{c|}{$\varepsilon$ = 1e-6} & \multicolumn{2}{c|}{$\varepsilon$ = 1e-7} & \multicolumn{2}{c|}{$\varepsilon$ = 1e-8} & \multicolumn{2}{c|}{$\varepsilon$ = 1e-9} & \multicolumn{2}{c|}{$\varepsilon$ = 1e-10} \bigstrut\\
    \hline
    8     & 7.89E-02 & 0.00  & 8.27E-02 & 0.00  & 8.52E-02 & 0.00  & 8.70E-02 & 0.00  & 8.85E-02 & 0.00  \bigstrut\\
    \hline
    16    & 1.93E-02 & 2.03  & 2.04E-02 & 2.02  & 2.11E-02 & 2.02  & 2.16E-02 & 2.01  & 2.20E-02 & 2.01  \bigstrut\\
    \hline
    32    & 4.70E-03 & 2.04  & 5.01E-03 & 2.02  & 5.21E-03 & 2.02  & 5.35E-03 & 2.01  & 5.45E-03 & 2.01  \bigstrut\\
    \hline
    64    & 1.14E-03 & 2.04  & 1.23E-03 & 2.02  & 1.29E-03 & 2.02  & 1.33E-03 & 2.01  & 1.35E-03 & 2.01  \bigstrut\\
    \hline
    128   & 2.77E-04 & 2.05  & 3.02E-04 & 2.03  & 3.18E-04 & 2.02  & 3.29E-04 & 2.01  & 3.36E-04 & 2.01  \bigstrut\\
    \hline
  \end{tabular}%
  \label{Ex1_k2}%
\end{table}%

\begin{example}\label{Ex5.2}
  \begin{align*}
    -\varepsilon \Delta u - 2 \partial_{x} u - 3 \partial_{y} u + u &= f, \quad \text{in}~\Omega = (0, 1)^{2},\\
    u&=0, \quad \text{on}~\partial\Omega,
  \end{align*}
  where $f(x, y)$ is chosen such that the exact solution
  $$u(x, y) = 2 \sin(1 - x) \left( 1 - e^{-\frac{2x}{\varepsilon}} \right) \left( 1 - y \right)^{2} \left( 1 - e^{-\frac{3y}{\varepsilon}} \right).$$
  The error $\3bar e_{N} \3bar$ and convergence rates for several different $\varepsilon$ are shown in Table \ref{Ex2_k1}-\ref{Ex2_k2}.
\end{example}

\begin{table}[h]
  \centering
  \caption{Example \ref{Ex5.2}, k=1}
  \begin{tabular}{|c|c|c|c|c|c|c|c|c|c|c|}
    \hline
    N     & \multicolumn{2}{c|}{$\varepsilon$ = 1e-6} & \multicolumn{2}{c|}{$\varepsilon$ = 1e-7} & \multicolumn{2}{c|}{$\varepsilon$ = 1e-8} & \multicolumn{2}{c|}{$\varepsilon$ = 1e-9} & \multicolumn{2}{c|}{$\varepsilon$ = 1e-10} \bigstrut\\
    \hline
    8     & 8.67E-02 & 0.00  & 8.68E-02 & 0.00  & 8.69E-02 & 0.00  & 8.70E-02 & 0.00  & 8.71E-02 & 0.00  \bigstrut\\
    \hline
    16    & 3.53E-02 & 1.30  & 3.53E-02 & 1.30  & 3.54E-02 & 1.30  & 3.54E-02 & 1.30  & 3.54E-02 & 1.30  \bigstrut\\
    \hline
    32    & 1.50E-02 & 1.24  & 1.50E-02 & 1.24  & 1.50E-02 & 1.24  & 1.50E-02 & 1.24  & 1.50E-02 & 1.24  \bigstrut\\
    \hline
    64    & 6.69E-03 & 1.16  & 6.69E-03 & 1.16  & 6.69E-03 & 1.16  & 6.69E-03 & 1.16  & 6.69E-03 & 1.16  \bigstrut\\
    \hline
    128   & 3.14E-03 & 1.09  & 3.14E-03 & 1.09  & 3.14E-03 & 1.09  & 3.14E-03 & 1.09  & 3.14E-03 & 1.09  \bigstrut\\
    \hline
    256   & 1.51E-03 & 1.05  & 1.51E-03 & 1.05  & 1.51E-03 & 1.05  & 1.51E-03 & 1.05  & 1.51E-03 & 1.05  \bigstrut\\
    \hline
  \end{tabular}%
  \label{Ex2_k1}%
\end{table}%

\begin{table}[h]
  \centering
  \caption{Example \ref{Ex5.2}, k=2}
  \begin{tabular}{|c|c|c|c|c|c|c|c|c|c|c|}
    \hline
    N     & \multicolumn{2}{c|}{$\varepsilon$ = 1e-6} & \multicolumn{2}{c|}{$\varepsilon$ = 1e-7} & \multicolumn{2}{c|}{$\varepsilon$ = 1e-8} & \multicolumn{2}{c|}{$\varepsilon$ = 1e-9} & \multicolumn{2}{c|}{$\varepsilon$ = 1e-10} \bigstrut\\
    \hline
    8     & 1.58E-02 & 0.00  & 1.58E-02 & 0.00  & 1.58E-02 & 0.00  & 1.58E-02 & 0.00  & 1.58E-02 & 0.00  \bigstrut\\
    \hline
    16    & 2.91E-03 & 2.44  & 2.91E-03 & 2.44  & 2.91E-03 & 2.44  & 2.91E-03 & 2.44  & 2.91E-03 & 2.44  \bigstrut\\
    \hline
    32    & 5.83E-04 & 2.32  & 5.83E-04 & 2.32  & 5.83E-04 & 2.32  & 5.83E-04 & 2.32  & 5.83E-04 & 2.32  \bigstrut\\
    \hline
    64    & 1.29E-04 & 2.17  & 1.29E-04 & 2.17  & 1.29E-04 & 2.17  & 1.29E-04 & 2.17  & 1.29E-04 & 2.17  \bigstrut\\
    \hline
    128   & 3.05E-05 & 2.08  & 3.05E-05 & 2.08  & 3.05E-05 & 2.08  & 3.05E-05 & 2.08  & 3.05E-05 & 2.08  \bigstrut\\
    \hline
  \end{tabular}%
  \label{Ex2_k2}%
\end{table}%

Tables \ref{Ex1_k1}–\ref{Ex2_k2} show our numerical results including the errors in the energy norm and convergence rates for Examples \ref{Ex5.1} and \ref{Ex5.2}. These data show uniform convergence of the singular perturbation parameter $\varepsilon$.

\section*{Statements and Declarations}


\smallskip
\noindent
\textbf{Data Availability}.
The code used in this work will be made available upon request to the authors.

\appendix
\section{~}

The goal of this Appendix is to establish some fundamental estimates useful in the error estimate. The following lemmas are employed in the convergence analysis, and readers are directed to \cite{oxford} for a detailed proof process.
\begin{lemma}\label{lemma4.3}
  Consider $\phi \in H^{k+1}(T)$ with $k \geq 1$. Let $\mathcal{Q}_{0} \phi$ denote the $L^2$-projection of $\phi$ onto $\mathbb{Q}_{k}(T)$. Then the following inequality estimate holds,
  \begin{align}
    &\Vert \phi - \mathcal{Q}_{0} \phi \Vert_{T} \leq C \left( h_{x}^{k+1} \left\Vert \partial_{x}^{k+1} \phi \right\Vert_{T} + h_{y}^{k+1} \left\Vert \partial_{y}^{k+1} \phi \right\Vert_{T} \right) \label{A.1} \\
    &\Vert \phi-\mathcal{Q}_{0} \phi\Vert_{\partial T_{i}}\leq C\left(h_{j}^{k+\frac{1}{2}} \Vert \partial_{j}^{k+1}\phi\Vert_{T}+h_{i}^{k+1}h_{j}^{-\frac{1}{2}} \Vert\partial_{i}^{k+1}\phi\Vert_{T}+h_{j}^{\frac{1}{2}}h_{i}^{k} \Vert\partial_{i}^{k}\partial_{j}\phi\Vert_{T}\right), \label{A.2}
  \end{align}
  where $i, j\in\{x, y\}$, and $i\neq j$.
\end{lemma}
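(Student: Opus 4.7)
The plan is to exploit the tensor-product structure of $\mathbb{Q}_k(T)$ on the rectangle $T = I_x \times I_y$, where $I_x = (x_0, x_1)$ and $I_y = (y_0, y_1)$, and reduce everything to one-dimensional $L^2$-projection estimates together with an anisotropic trace inequality. Let $\Pi_x$ denote the one-dimensional $L^2$-projection onto polynomials of degree $\leq k$ acting in the $x$-variable (with $y$ as a parameter), and analogously $\Pi_y$ in the $y$-variable. Then $\mathcal{Q}_0 = \Pi_x \Pi_y = \Pi_y \Pi_x$, and one has the standard commutation $\partial_x \Pi_x = \Pi_x^{k-1}\partial_x$ and $\partial_y \Pi_y = \Pi_y^{k-1}\partial_y$, together with $L^2$-stability $\|\Pi_x\|, \|\Pi_y\| \leq 1$.

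For \eqref{A.1}, I would split
\begin{equation*}
\phi - \mathcal{Q}_0 \phi = (\phi - \Pi_x \phi) + \Pi_x(\phi - \Pi_y \phi).
\end{equation*}
The first term is estimated fiber-wise by the univariate Bramble–Hilbert lemma, $\|\phi(\cdot,y) - \Pi_x \phi(\cdot,y)\|_{L^2(I_x)} \leq C h_x^{k+1}\|\partial_x^{k+1}\phi(\cdot,y)\|_{L^2(I_x)}$, followed by integration in $y$. The second term is bounded by $\|\phi - \Pi_y\phi\|_T$ by $L^2$-stability of $\Pi_x$, and then the univariate estimate in $y$ gives the $h_y^{k+1}$ factor.

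For the trace bound \eqref{A.2}, take $i=x$, $j=y$ so $\partial T_x$ is the union of the horizontal edges. I would use the same splitting and the anisotropic trace inequality
\begin{equation*}
\|w\|_{L^2(\partial T_x)}^2 \leq C\left(h_y^{-1}\|w\|_T^2 + h_y\|\partial_y w\|_T^2\right),
\end{equation*}
derivable by scaling from the reference element. Applied to $w = \phi - \Pi_y\phi$ together with $\|\partial_y(\phi-\Pi_y\phi)\|_T \leq Ch_y^k\|\partial_y^{k+1}\phi\|_T$, this produces the first term $h_y^{k+1/2}\|\partial_y^{k+1}\phi\|_T$. For the second piece $\Pi_x(\phi - \Pi_y\phi)$, I would instead apply the trace inequality with $w = \phi - \Pi_x\phi$ (after again exchanging the order $\mathcal{Q}_0 = \Pi_y\Pi_x$), giving a contribution from $\|w\|_T$ of size $h_x^{k+1}h_y^{-1/2}\|\partial_x^{k+1}\phi\|_T$ and from $\|\partial_y w\|_T = \|\partial_y(\phi - \Pi_x\phi)\|_T = \|(I-\Pi_x)\partial_y\phi\|_T \leq Ch_x^k\|\partial_x^k\partial_y\phi\|_T$ the mixed term $h_y^{1/2}h_x^k\|\partial_x^k\partial_y\phi\|_T$.

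The main obstacle is getting the three terms in \eqref{A.2} with exactly the prescribed anisotropic weights, rather than the coarser isotropic bound one would obtain from the usual $h$-uniform trace inequality. The key points are (i) applying the trace inequality in the direction $j$ \emph{transverse} to the edge $\partial T_i$, so that the $h_j^{\pm 1/2}$ factors are correctly aligned; and (ii) using the commutation $\partial_y \Pi_x = \Pi_x \partial_y$ to move the $y$-derivative through the projection in the third term, which is what produces $\|\partial_x^k\partial_y\phi\|_T$ with the extra $h_x^k$ saving compared to the naive bound $h_x^{k+1}h_y^{-1/2}\|\partial_x^{k+1}\phi\|_T$ alone. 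Both ingredients are classical but are exactly the content that \cite{oxford} provides on a reference rectangle and then transfers to $T$ by affine scaling.
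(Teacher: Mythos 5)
Your proposal cannot be matched line-by-line against the paper, because the paper does not actually prove this lemma: it only refers the reader to \cite{oxford} (``readers are directed to \cite{oxford} for a detailed proof process''). Your tensor-product argument --- writing $\mathcal{Q}_0=\Pi_x\Pi_y$ on the rectangle, splitting $\phi-\mathcal{Q}_0\phi=(\phi-\Pi_y\phi)+\Pi_y(\phi-\Pi_x\phi)$, and combining univariate projection errors with the trace inequality applied in the direction transverse to the edge --- is precisely the classical route to such anisotropic bounds (it is what the cited reference formalizes via reference-element scaling), and it does deliver \eqref{A.1} and \eqref{A.2} with the stated weights. Two blemishes should be repaired before this counts as a complete proof. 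First, the commutation $\partial_x\Pi_x=\Pi_x^{k-1}\partial_x$ you state at the outset is \emph{false} for the univariate $L^2$-projection: with $k=1$ on $[-1,1]$ and $\phi=L_3$ the third Legendre polynomial, $(\Pi_1 L_3)'=0$ while $\Pi_0(L_3')=1$, since $L_3'=5L_2+L_0$. Fortunately you never use it: what your estimates actually require are the cross-commutation $\partial_y\Pi_x=\Pi_x\partial_y$ (true, as $\Pi_x$ acts only in the $x$-variable) and the standard univariate bounds $\|\phi-\Pi_y\phi\|_T\le Ch_y^{k+1}\|\partial_y^{k+1}\phi\|_T$ and $\|\partial_y(\phi-\Pi_y\phi)\|_T\le Ch_y^{k}\|\partial_y^{k+1}\phi\|_T$, the latter following from Bramble--Hilbert plus an inverse inequality rather than from any commutation. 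Second, in the edge estimate the function whose trace you must control is $\Pi_y w$ with $w=\phi-\Pi_x\phi$, not $w$ itself, and $\Pi_y$ does not commute with evaluation at $y=y_0$; this is fixed by one extra line, e.g. $\|\Pi_y w\|_{\partial T_x}\le\|w\|_{\partial T_x}+\|(I-\Pi_y)w\|_{\partial T_x}$, applying the anisotropic trace inequality to both terms and using $\|\partial_y(I-\Pi_y)w\|_T\le C\|\partial_y w\|_T$, which reproduces exactly the contributions $h_y^{-1/2}\|w\|_T+h_y^{1/2}\|\partial_y w\|_T$ you claimed. With these repairs your sketch is correct, and in fact slightly stronger than the lemma: the fiber-wise $L^\infty$-stability of $\Pi_y$ (via a one-dimensional inverse estimate) already gives $\|\Pi_y w\|_{\partial T_x}\le Ch_y^{-1/2}\|w\|_T$, so the two-term bound $C\bigl(h_y^{k+1/2}\|\partial_y^{k+1}\phi\|_T+h_x^{k+1}h_y^{-1/2}\|\partial_x^{k+1}\phi\|_T\bigr)$ holds, of which \eqref{A.2} is a weakening by the nonnegative mixed term.
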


\begin{lemma}
  Let $v \in \mathbb{Q}_{k}(T)$ with $k \geq 1$ such that the following inequalities holds,
  \begin{align}
    \Vert v \Vert_{\partial T_{i}} \leq &C h_{j}^{-\frac{1}{2}} \Vert v \Vert_{T}, \label{A.5} \\ 
    \Vert \partial_{i} v \Vert_{T} \leq &C h_{i}^{-1} \Vert v \Vert_{T}, \label{A.6}
  \end{align}
  where $C$ is a constant only depends on $k$ and $i, j \in\{x, y\}$, $i\neq j$. 
\end{lemma}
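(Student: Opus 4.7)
The plan is to reduce both inequalities to a standard norm-equivalence statement on a finite-dimensional reference space via a tensor-product affine change of variables. For an element $T=[x_0,x_0+h_x]\times[y_0,y_0+h_y]$, I would introduce the reference square $\hat T=[0,1]^2$ together with the map $x=x_0+h_x\hat x$, $y=y_0+h_y\hat y$, and the pulled-back function $\hat v(\hat x,\hat y):=v(x,y)$. Because this affine map is separable in $x$ and $y$, it preserves the tensor-product polynomial structure, so $\hat v\in\mathbb{Q}_k(\hat T)$ whenever $v\in\mathbb{Q}_k(T)$.

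The next step is to record the anisotropic scaling identities that will translate the reference estimates into (A.5) and (A.6). A direct change of variables yields
\[
\|v\|_T^2 = h_x h_y\,\|\hat v\|_{\hat T}^2.
\]
On an edge in $\partial T_i$, which carries the line element $dx_i$,
\[
\|v\|_{\partial T_i}^2 = h_i\,\|\hat v\|_{\partial\hat T_i}^2,
\]
and, using $\partial_i v = h_i^{-1}\partial_{\hat i}\hat v$,
\[
\|\partial_i v\|_T^2 = h_i^{-1} h_j\,\|\partial_{\hat i}\hat v\|_{\hat T}^2,
\]
with the analogous identities obtained by interchanging the roles of $x$ and $y$.

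On the fixed reference element, the space $\mathbb{Q}_k(\hat T)$ is finite-dimensional, so the $L^2(\hat T)$-norm is equivalent to both the edge-trace $L^2(\partial\hat T_i)$-norm and to the derivative norm $\|\partial_{\hat i}(\cdot)\|_{\hat T}$. Hence there exists a constant $\hat C=\hat C(k)$ with
\[
\|\hat v\|_{\partial\hat T_i}\le \hat C\,\|\hat v\|_{\hat T},\qquad \|\partial_{\hat i}\hat v\|_{\hat T}\le \hat C\,\|\hat v\|_{\hat T},\quad i\in\{x,y\}.
\]

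Substituting these reference bounds back into the scaling identities yields the claims at once. For (A.5),
\[
\|v\|_{\partial T_i}=\sqrt{h_i}\,\|\hat v\|_{\partial\hat T_i}\le \hat C\sqrt{h_i}\,\|\hat v\|_{\hat T}=\hat C\sqrt{h_i}\,(h_xh_y)^{-1/2}\|v\|_T = C h_j^{-1/2}\|v\|_T,
\]
and for (A.6),
\[
\|\partial_i v\|_T=\sqrt{h_i^{-1}h_j}\,\|\partial_{\hat i}\hat v\|_{\hat T}\le \hat C\sqrt{h_i^{-1}h_j}\,(h_xh_y)^{-1/2}\|v\|_T = C h_i^{-1}\|v\|_T.
\]
There is no real technical obstacle; both inequalities are classical shape-regular facts. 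The only point that genuinely needs attention is bookkeeping the anisotropic scaling: keeping straight that an edge of $\partial T_i$ contributes a factor $h_i$ from its line element while the \emph{transverse} length $h_j$ is the one that survives in (A.5), and that differentiation in direction $i$ strips a factor $h_i^{-1}$ so that the tangential length reappears in (A.6).
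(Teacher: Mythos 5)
Your argument is correct, and it is worth noting that the paper never actually proves this lemma—it only points the reader to \cite{oxford}—so your tensor-product scaling argument supplies precisely the standard proof that the paper delegates to the literature, and the anisotropic bookkeeping checks out: the line element contributes $h_i$, so $\sqrt{h_i}(h_xh_y)^{-1/2}=h_j^{-1/2}$ gives (\ref{A.5}), and $\sqrt{h_i^{-1}h_j}(h_xh_y)^{-1/2}=h_i^{-1}$ gives (\ref{A.6}). Two small points of wording. First, the reference-element bounds you invoke are one-sided only: $\|\hat v\|_{\partial\hat T_i}$ and $\|\partial_{\hat i}\hat v\|_{\hat T}$ are merely \emph{seminorms} on $\mathbb{Q}_k(\hat T)$ (the first vanishes on $\hat y(1-\hat y)$ when $i=x$, the second on polynomials independent of $\hat x_i$), so ``equivalent'' should read ``bounded by the $L^2(\hat T)$-norm''; this is exactly the direction you use, so nothing breaks. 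Second, your closing remark that these are ``classical shape-regular facts'' undersells your own proof: because the change of variables is diagonal and maps $\mathbb{Q}_k$ to $\mathbb{Q}_k$, the constant depends only on $k$ and \emph{not} on the aspect ratio $h_x/h_y$—which is the whole point here, since on the Bakhvalov-type mesh the cells in the layer regions have aspect ratios of order $\varepsilon^{-1}$ and a shape-regularity-based trace or inverse inequality would be useless. Your derivation is anisotropy-safe as written; only the label is off.
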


We would like to establish the following estimates which are useful in the convergence analysis for the WG scheme (\ref{3.3}).
\begin{lemma}
  Let $k \geq 1$ and $u \in H^{k+1}(\Omega)$. There exists a constant $C$ such that the following estimates hold true,
  \begin{align}
    & \left( \sum_{T\in\mathcal{T}_{N}} \left\Vert u - \mathcal{Q}_{0} u \right\Vert_{T}^{2} \right)^{\frac{1}{2}} \leq C N^{-(k
    +1)}, \label{A.7} \\
    & \sum_{i=x, y} \left( \sum_{T\in\mathcal{T}_{N}} \left\Vert u - \mathcal{Q}_{0} u \right\Vert_{\partial T_{i}}^{2} \right)^{\frac{1}{2}} \leq C N^{-(k+\frac{1}{2})}, \label{A.8} \\
    & \sum_{i=x, y} \left( \sum_{T\in\mathcal{T}_{N}} \theta_{T} \left\Vert\nabla u - \mathbf{Q}_{N} \nabla u \right\Vert_{\partial T_{i}}^{2} \right)^{\frac{1}{2}} \leq C N^{-k}. \label{A.9}\\
    & \sum_{i=x, y} \left( \sum_{T\in\mathcal{T}_{N}} \vartheta_{T} \left\Vert u - \mathcal{Q}_{0} u \right\Vert_{\partial T_{i}}^{2} \right)^{\frac{1}{2}} \leq C N^{-k}, \label{A.10}
  \end{align}
  where
  \begin{align*}
    \theta_{T}=
    \begin{cases}
      \varepsilon^{2} N^{-1}, &\text{if} ~ T \in \Omega_{0}, \\
      \varepsilon h_{y}, &\text{if} ~ T \in \Omega \backslash \Omega_{0}, \text{on} ~ \partial T_{x},\\
      \varepsilon h_{x}, &\text{if} ~ T \in \Omega \backslash \Omega_{0}, \text{on} ~ \partial T_{y}.
    \end{cases}
  \end{align*}
\end{lemma}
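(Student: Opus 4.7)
The plan is to establish each of \eqref{A.7}--\eqref{A.10} by inserting the solution decomposition $u=S+E_{1}+E_{2}+E_{12}$ from Assumption \ref{assume2.1}, treating each component separately, splitting the sum over $\mathcal{T}_{N}$ into the four subregions $\Omega_{0},\Omega_{1},\Omega_{2},\Omega_{12}$, and combining the local approximation estimates \eqref{A.1}--\eqref{A.2} with the mesh properties in Lemma \ref{lemma2.1}. I will be guided throughout by a single dichotomy: on a subregion where a given layer component is already exponentially small, I exploit pointwise smallness together with $\varepsilon\le N^{-1}$ and $\sigma\ge k+1$; on a subregion where the layer is not small, the mesh is anisotropically refined so that the $\varepsilon^{-i}$ or $\varepsilon^{-j}$ factors appearing in the layer derivative bounds of Assumption \ref{assume2.1} are absorbed by $h_{x}^{k+1}$ or $h_{y}^{k+1}$.

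The smooth part $S$ is disposed of in one step: all of its derivatives up to order $k+1$ are uniformly bounded, every mesh length satisfies $h_{x},h_{y}\le CN^{-1}$ by Lemma \ref{lemma2.1}, and the weights $\vartheta_{T},\theta_{T}$ have the correct scaling ($\vartheta_{T}\le N$, $\theta_{T}\le CN^{-2}$) to be absorbed harmlessly. Inserting these bounds into \eqref{A.1}--\eqref{A.2} and summing elementwise immediately gives the required orders for $S$ in each of \eqref{A.7}--\eqref{A.10}.

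For the boundary layer $E_{1}$ (decaying in $y$) I will split along the $y$-transition. On the fine-$y$ subregion $\Omega_{12}\cup\Omega_{1}$, the crucial estimate from Lemma \ref{lemma2.1}, namely $h_{y,j}^{\rho}\,e^{-\beta_{2}y_{j-1}/\varepsilon}\le C\varepsilon^{\rho}N^{-\rho}$ for $\rho\le\sigma$, together with the pointwise bound $\|\partial_{y}^{k+1}E_{1}\|_{T}^{2}\le Ch_{x}h_{y}\varepsilon^{-2(k+1)}e^{-2\beta_{2}y_{j-1}/\varepsilon}$, absorbs the $\varepsilon^{-(k+1)}$ into $h_{y}^{k+1}$ and produces $O(N^{-2(k+1)}h_{x}h_{y})$ per element, which sums to $O(N^{-2(k+1)})$. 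The companion $x$-derivative term, summed using $\sum_{j}h_{y,j}e^{-2\beta_{2}y_{j-1}/\varepsilon}\le C\varepsilon$ and $\sum_{i}h_{x,i}^{2(k+1)+1}\le CN^{-2(k+1)}$, produces the same order. On the coarse-$y$ subregion $\Omega_{0}\cup\Omega_{2}$, every point satisfies $y\ge y_{N/2-1}$, and direct evaluation of \eqref{bakhvalov_mesh} gives $e^{-\beta_{2}y/\varepsilon}\le CN^{-\sigma}$; the derivative estimate then contributes $\varepsilon^{-(k+1)}N^{-\sigma}\le N^{k+1-\sigma}\le C$ by $\sigma\ge k+1$ and $\varepsilon\le N^{-1}$, which combined with the uniform coarse mesh size $h_{y}\le 2N^{-1}$ yields the bound. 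A symmetric argument handles $E_{2}$, and $E_{12}$ is treated by applying the two coordinate-wise dichotomies simultaneously.

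The trace and gradient bounds \eqref{A.8}--\eqref{A.10} follow the same template, with \eqref{A.2} replacing \eqref{A.1} and the weight $\vartheta_{T}$ or $\theta_{T}$ carried through the calculation. The region-wise choices of these weights are designed precisely for this balance: on $\Omega_{0}$, the value $\vartheta_{T}=N$ supplies the missing $N^{1/2}$ between the $N^{-(k+1/2)}$ trace of $S$ and the target $N^{-k}$, while $\theta_{T}=\varepsilon^{2}N^{-1}$ compensates the $\varepsilon^{-1}$ that appears when trace-estimating the derivatives of the layer pieces; on the layer regions, the pairings $\vartheta_{T}=\varepsilon h^{-1}$ and $\theta_{T}=\varepsilon h$ with the anisotropic mesh lengths produce the correct powers. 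The principal obstacle is not any individual inequality but the bookkeeping: in each of the sixteen (region, component) pairings the powers of $\varepsilon$ and $h$, the exponential factor $e^{-\beta y/\varepsilon}$, and the prescribed weight must line up so that the contribution has exactly the stated order $N^{-(k+1)}$, $N^{-(k+1/2)}$, or $N^{-k}$, and the hypothesis $\sigma\ge k+1$ is invoked precisely to force $N^{-\sigma}\varepsilon^{-(k+1)}\le C$ in the coarse subregions.
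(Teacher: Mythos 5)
Your architecture coincides with the paper's proof: decompose $u=S+E_{1}+E_{2}+E_{12}$ by Assumption \ref{assume2.1}, split the sums over $\Omega_{0},\Omega_{1},\Omega_{2},\Omega_{12}$, use \eqref{A.1}--\eqref{A.2} with Lemma \ref{lemma2.1} (in particular $h_{j}^{\rho}e^{-\beta y/\varepsilon}\le C\varepsilon^{\rho}N^{-\rho}$) in the layer-resolved regions, and obtain \eqref{A.10} from \eqref{A.8} via $\vartheta_{T}\le CN$. However, your treatment of the layer components on the \emph{coarse} subregions contains a genuine error. For $E_{1}$ on $\Omega_{2}\cup\Omega_{0}$ you apply the derivative-based projection estimate and then assert $\varepsilon^{-(k+1)}N^{-\sigma}\le N^{k+1-\sigma}\le C$ ``by $\sigma\ge k+1$ and $\varepsilon\le N^{-1}$.'' This inequality runs in the wrong direction: $\varepsilon\le N^{-1}$ gives $\varepsilon^{-(k+1)}\ge N^{k+1}$, so that $\varepsilon^{-(k+1)}N^{-\sigma}=(\varepsilon N)^{-(k+1)}N^{k+1-\sigma}$, and $(\varepsilon N)^{-(k+1)}$ is unbounded as $\varepsilon\to 0$ for fixed $N$. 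Multiplying by the coarse mesh factor $h_{y}^{k+1}\le CN^{-(k+1)}$ does not repair this: the per-element contribution is still of size $(\varepsilon N)^{-(k+1)}N^{-\sigma}$, which destroys uniformity in $\varepsilon$. Note also that the pointwise smallness available on the coarse region is only $e^{-\beta_{2}y/\varepsilon}\le CN^{-\sigma}$ (attained near $y_{N/2-1}$, where $1-2(1-\varepsilon)(N/2-1)/N=\varepsilon+2(1-\varepsilon)/N\approx 2/N$), not $(\varepsilon/N)^{\sigma}$, so no fixed $\sigma$ can absorb a negative power of $\varepsilon$. The same failure recurs for $E_{12}$ on $\Omega_{1}$ and $\Omega_{2}$, where only one coordinate direction is fine, so your ``simultaneous dichotomy'' for the corner layer breaks down outside $\Omega_{12}$.

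The repair is the one the paper uses: on subregions where a layer component is exponentially small, never differentiate it. Instead, use the triangle inequality together with $L^{2}$-stability of $\mathcal{Q}_{0}$, e.g.\ $\Vert E_{1}-\mathcal{Q}_{0}E_{1}\Vert_{\Omega_{2}\cup\Omega_{0}}\le C\Vert E_{1}\Vert_{\Omega_{2}\cup\Omega_{0}}\le CN^{-\sigma}$, and for the edge quantities in \eqref{A.8}--\eqref{A.9} invoke the inverse trace inequality \eqref{A.5} to convert $\Vert\mathcal{Q}_{0}E_{1}\Vert_{\partial T_{i}}$ into $h_{j}^{-1/2}\Vert E_{1}\Vert_{T}$ (and \eqref{A.6} for the gradient terms), so that the only negative powers of $\varepsilon$ are the single factors $\varepsilon^{-1}$ already present in the first-order layer derivatives, which are then compensated by the weights $\theta_{T}$ chosen for \eqref{A.9}. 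In this route $\sigma\ge k+1$ is used only to convert the resulting $N^{-\sigma}$ or $N^{1/2-\sigma}$ bounds into the target orders $N^{-(k+1)}$, $N^{-(k+1/2)}$, $N^{-k}$. With this substitution in the coarse regions, your fine-region analysis and weight bookkeeping are sound and essentially identical to the paper's.
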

\begin{proof}
  Each term in the Assumption\ref{assume2.1} will be considered individually. To derive inequality (\ref{A.7}), we use (\ref{A.1}) and Lemma\ref{lemma2.1} to obtain
  \begin{align*}
    \Vert S - \mathcal{Q}_{0} S \Vert_{\Omega_{l}} &\leq C \left(\sum_{T\in \Omega_{l}} \left( h_{x}^{2(k+1)} + h_{y}^{2(k+1)} \right) \cdot h_{x} h_{y} \right)^{\frac{1}{2}}\\
    &\leq C N \left(h_{N}^{2(k+2)} \right)^{\frac{1}{2}}\\
    &\leq C N^{-(k+1)},
  \end{align*}
  Next, considering the boundary layer $E_{1}$ in region $\Omega_{12} \cup \Omega_{1}$, using (\ref{A.1}) and Lemma\ref{lemma2.1} we have
  \begin{align*}
    \Vert E_{1} - \mathcal{Q}_{0} E_{1} \Vert_{\Omega_{12} \cup \Omega_{1}} &\leq C \left(\sum_{T \in \Omega_{12} \cup \Omega_{1}} \left( h_{x}^{2(k+1)} \left\Vert \partial_{x}^{k+1} E_{1} \right\Vert_{T}^{2} + h_{y}^{2(k+1)} \left\Vert \partial_{y}^{k+1} E_{1} \right\Vert_{T}^{2} \right) \right)^{\frac{1}{2}}\\
    &\leq C \left( \sum_{T \in \Omega_{12} \cup \Omega_{1}}\left( h_{x}^{2k+3} h_{y} + \varepsilon^{-2(k+1)} h_{x} h_{y}^{2k+3} \right) \left\Vert e^{-\frac{\beta_{2} y}{\varepsilon}} \right\Vert_{L^{\infty}(T)}^{2}  \right)^{\frac{1}{2}}\\
    &\leq C N \left(h_{x,N}^{2(k+3)} \varepsilon N^{-1} + h_{x,N} h_{y, \frac{N}{2}-1} N^{-2(k+1)} \right)^{\frac{1}{2}}\\
    &\leq C N \left( \varepsilon N^{-2(k+2)} + \varepsilon N^{-(2k+3)} \right)^{\frac{1}{2}}\\
    &\leq C N^{-(k+1)},
  \end{align*}
  As for the region $\Omega_{2} \cup \Omega_{0}$, we have the following estimate
  \begin{align*}
    \Vert E_{1} - \mathcal{Q}_{0} E_{1} \Vert_{\Omega_{2} \cup \Omega_{0}} &\leq C \left( \sum_{T \in \Omega_{2} \cup \Omega_{0}} \left( \left\Vert E_{1} \right\Vert_{T}^{2} + \left\Vert \mathcal{Q}_{0} E_{1} \right\Vert_{T}^{2} \right) \right)^{\frac{1}{2}}\\
    &\leq C \left\Vert E_{1} \right\Vert_{\Omega_{2} \cup \Omega_{0}}\\
    &\leq C N \left( h_{x,N} h_{y,N} \left\Vert E_{1} \right\Vert_{L^{\infty}(\Omega_{2} \cup \Omega_{0})}^{2} \right)^{\frac{1}{2}}\\
    &\leq C N^{-\sigma}
  \end{align*}
  A similar bound can be readily obtained for $E_{2}$. For the concer layer $E_{12}$, by applying inequalities (\ref{A.1}) and Lemma\ref{lemma2.1}, we arrive at
  \begin{align*}
    \Vert E_{12} - \mathcal{Q}_{0} E_{12} \Vert_{\Omega_{12}} &\leq C \left( \sum_{T \in \Omega_{12}} \left( h_{x}^{2k+3} h_{y} \left\Vert \partial_{x}^{k+1} E_{12} \right\Vert_{L^{\infty}(T)}^{2} + h_{x} h_{y}^{2k+3} \left\Vert \partial_{y}^{k+1} E_{12} \right\Vert_{L^{\infty}(T)}^{2} \right) \right)^{\frac{1}{2}}\\
    &\leq C \left( \sum_{T \in \Omega_{12}} \varepsilon^{-2(k+1)} \left( h_{x}^{2k+3} h_{y} + h_{x} h_{y}^{2k+3} \right) \left\Vert e^{-\frac{\beta_{1} x + \beta_{2} y}{\varepsilon}} \right\Vert_{L^{\infty}(T)}^{2}  \right)^{\frac{1}{2}}\\
    &\leq C N \left( \varepsilon N^{-1} N^{-2(k+1)} \left( h_{x, \frac{N}{2}-1} + h_{y, \frac{N}{2}-1} \right) \right)^{\frac{1}{2}}\\
    &\leq C N \left( \varepsilon^{2} N^{-(2k+3)} \right)^{\frac{1}{2}}\\
    &\leq C N^{-(k+\frac{3}{2})},
  \end{align*} 
  Also, we have the estimate in regions $\Omega_{1} \cup \Omega_{0}$ and $\Omega_{2} \cup \Omega_{0}$, as follows
  \begin{align*}
    \Vert E_{12} - \mathcal{Q}_{0} E_{12} \Vert_{\Omega_{1} \cup \Omega_{0}} &\leq C \left( \sum_{T \in \Omega_{1} \cup \Omega_{0}} \left( \left\Vert E_{12} \right\Vert_{T}^{2} + \left\Vert \mathcal{Q}_{0} E_{12} \right\Vert_{T}^{2} \right) \right)^{\frac{1}{2}}\\
    &\leq C \left\Vert E_{12} \right\Vert_{\Omega_{1} \cup \Omega_{0}}\\
    &\leq C N \left( h_{x,N} h_{y,N} \left\Vert E_{12} \right\Vert_{L^{\infty}(\Omega_{1} \cup \Omega_{0})}^{2} \right)^{\frac{1}{2}}\\
    &\leq C N^{-\sigma}
  \end{align*}
  and
  \begin{align*}
    \Vert E_{12} - \mathcal{Q}_{0} E_{12} \Vert_{\Omega_{2} \cup \Omega_{0}} &\leq C \left( \sum_{T \in \Omega_{2} \cup \Omega_{0}} \left( \left\Vert E_{12} \right\Vert_{T}^{2} + \left\Vert \mathcal{Q}_{0} E_{12} \right\Vert_{T}^{2} \right) \right)^{\frac{1}{2}}\\
    &\leq C \left\Vert E_{12} \right\Vert_{\Omega_{2} \cup \Omega_{0}}\\
    &\leq C N \left( h_{x,N} h_{y,N} \left\Vert E_{12} \right\Vert_{L^{\infty}(\Omega_{2} \cup \Omega_{0})}^{2} \right)^{\frac{1}{2}}\\
    &\leq C N^{-\sigma}
  \end{align*} 
  Combining the above inequalities, we have completed the proof of (\ref{A.7}).

  Next, we give the proof of (\ref{A.8}). Let $\Omega_{\ell}$ represents any region in $\Omega_{0}, \Omega_{1}, \Omega_{2}$ and $\Omega_{12}$, by using (\ref{A.2}) and Lemma\ref{lemma2.1} we obtain 
  \begin{align*}
    \left( \sum_{T \in \Omega_{\ell}} \left\Vert S - \mathcal{Q}_{0} S \right\Vert_{\partial T_{i}}^{2} \right)^{\frac{1}{2}} &\leq C \left( \sum_{T \in \Omega_{\ell}} \left( h_{j}^{2k+1} + h_{i}^{2(k+1)} h_{j}^{-1} + h_{i}^{2k} h_{j} \right) \cdot h_{i} h_{j} \right)^{\frac{1}{2}}\\
    &\leq C N \left( h_{N}^{2k+1} \right)^{\frac{1}{2}}\\
    &\leq C N^{-(k + \frac{1}{2})},
  \end{align*}
  For the boundary layer $E_{1}$ in region $\Omega_{12} \cup \Omega_{1}$, using (\ref{A.2}) and Lemma\ref{lemma2.1} we have 
  \begin{align*}
    &\left( \sum_{T \in \Omega_{12} \cup \Omega_{1}} \left\Vert E_{1} - \mathcal{Q}_{0} E_{1} \right\Vert_{\partial T_{x}}^{2} \right)^{\frac{1}{2}} \\
    &\leq C \left( \sum_{T \in \Omega_{12} \cup \Omega_{1}} \left( h_{y}^{2k+1} \left\Vert \partial_{y}^{k+1} E_{1} \right\Vert_{T}^{2} + h_{x}^{2(k+1)} h_{y}^{-1} \left\Vert \partial_{x}^{k+1} E_{1} \right\Vert_{T}^{2} + h_{x}^{2k} h_{y} \left\Vert \partial_{x}^{k} \partial_{y} E_{1} \right\Vert_{T}^{2} \right) \right)^{\frac{1}{2}}\\
    &\leq C \left( \sum_{T \in \Omega_{12} \cup \Omega_{1}} \left( \varepsilon^{-2(k+1)} \cdot h_{x} h_{y}^{2(k+1)} + h_{x}^{2k+3} + \varepsilon^{-2} \cdot h_{x}^{2k+1} h_{y}^{2} \right) \left\Vert e^{-\frac{\beta_{2} y}{\varepsilon}} \right\Vert_{L^{\infty}(T)}^{2} \right)^{\frac{1}{2}}\\
    &\leq C N \left( h_{x, N} N^{-2(k+1)} + h_{x, N}^{2k+3} + h_{x, N}^{2k+1} N^{-2} \right)^{\frac{1}{2}}\\
    &\leq C N^{-(k + \frac{1}{2})},
  \end{align*}
  By using (\ref{A.5}), (\ref{A.6}) and Lemma\ref{lemma2.1} in the rest of the region we get
  \begin{align*}
    \left( \sum_{T \in \Omega_{2} \cup \Omega_{0}} \left\Vert E_{1} - \mathcal{Q}_{0} E_{1} \right\Vert_{\partial T_{x}}^{2} \right)^{\frac{1}{2}} &\leq C \left( \sum_{T \in \Omega_{2} \cup \Omega_{0}} \left( \left\Vert E_{1} \right\Vert_{\partial T_{1}}^{2} + \left\Vert \mathcal{Q}_{0} E_{1} \right\Vert_{\partial T_{1}}^{2} \right) \right)^{\frac{1}{2}}\\
    &\leq C \left( \sum_{T \in \Omega_{2} \cup \Omega_{0}} \left( \left\Vert E_{1} \right\Vert_{\partial T_{1}}^{2} + h_{y}^{-1} \left\Vert E_{1} \right\Vert_{T}^{2} \right) \right)^{\frac{1}{2}}\\
    &\leq C \left( N \int_{0}^{1} e^{-\frac{2 \beta_{1} y}{\varepsilon}} \,dx + \sum_{T \in \Omega_{2} \cup \Omega_{0}} h_{x} \left\Vert e^{-\frac{\beta_{1} y}{\varepsilon}} \right\Vert_{L^{\infty}(T)}^{2} \right)^{\frac{1}{2}}\\
    &\leq C \left( N \cdot N^{-2\sigma} + h_{x, N} \cdot N^{2} \cdot N^{-2\sigma} \right)^{\frac{1}{2}}\\
    &\leq C N^{\frac{1}{2} - \sigma},
  \end{align*}
  Similar result are easily obtained for $\partial T_{y}$. The same goes for the boundary layer $E_{2}$. Now discuss the concer layer $E_{12}$, we have
  \begin{align*}
    &\left( \sum_{T \in \Omega_{12}} \left\Vert E_{12} - \mathcal{Q}_{0} E_{12} \right\Vert_{\partial T_{x}}^{2} \right)^{\frac{1}{2}} \\
    &\leq C \left( \sum_{T \in \Omega_{12}} \left( h_{y}^{2k+1} \left\Vert \partial_{y}^{k+1} E_{12} \right\Vert_{T}^{2} + h_{x}^{2(k+1)} h_{y}^{-1} \left\Vert \partial_{x}^{k+1} E_{12} \right\Vert_{T}^{2} + h_{x}^{2k} h_{y} \left\Vert \partial_{x}^{k} \partial_{y} E_{12} \right\Vert_{T}^{2} \right) \right)^{\frac{1}{2}}\\
    &\leq C \left( \sum_{T \in \Omega_{12}} \varepsilon^{-2(k+1)} \left( h_{x} h_{y}^{2(k+1)} + h_{x}^{2k+3} + h_{x}^{2k+1} h_{y}^{2} \right) \left\Vert e^{-\frac{\beta_{1} x + \beta_{2} y}{\varepsilon}} \right\Vert_{L^{\infty}(T)}^{2} \right)^{\frac{1}{2}}\\
    &\leq C N \left( \varepsilon N^{-1} N^{-2(k+1)} + h_{x, \frac{N}{2}-1} N^{-2(k+1)} + \varepsilon N^{-(2k+1)} N^{-2} \right)^{\frac{1}{2}}\\
    &\leq C N^{-(k + \frac{1}{2})},
  \end{align*}
 where we have used (\ref{A.2}) and Lemma\ref{lemma2.1}. And in other regions, using (\ref{A.5}), (\ref{A.6}) and Lemma\ref{lemma2.1} we derive
  \begin{align*}
    \left( \sum_{T \in \Omega_{1} \cup \Omega_{0}} \left\Vert E_{12} - \mathcal{Q}_{0} E_{12} \right\Vert_{\partial T_{x}}^{2} \right)^{\frac{1}{2}} &\leq C \left( \sum_{T \in \Omega_{1} \cup \Omega_{0}} \left( \left\Vert E_{12} \right\Vert_{\partial T_{x}}^{2} + \left\Vert \mathcal{Q}_{0} E_{12} \right\Vert_{\partial T_{x}}^{2} \right) \right)^{\frac{1}{2}}\\
    &\leq C \left( \sum_{T \in \Omega_{1} \cup \Omega_{0}} \left( \left\Vert E_{12} \right\Vert_{\partial T_{x}}^{2} + h_{y}^{-1} \left\Vert E_{12} \right\Vert_{T}^{2} \right) \right)^{\frac{1}{2}}\\
    &\leq C \left( N \int_{x_{N/2-1}}^{1} e^{-\frac{2 (\beta_{1} x + \beta_{2} y)}{\varepsilon}} \,dx + \sum_{T \in \Omega_{1} \cup \Omega_{0}} h_{x} \left\Vert e^{-\frac{\beta_{1} x + \beta_{2} y}{\varepsilon}} \right\Vert_{L^{\infty}(T)}^{2} \right)^{\frac{1}{2}}\\
    &\leq C \left( N \cdot \varepsilon N^{-2\sigma} + h_{x, N} \cdot N^{2} \cdot N^{-2\sigma} \right)^{\frac{1}{2}}\\
    &\leq C N^{\frac{1}{2} - \sigma},
  \end{align*}
  and
  \begin{align*}
    \left( \sum_{T \in \Omega_{2} \cup \Omega_{0}} \left\Vert E_{12} - \mathcal{Q}_{0} E_{12} \right\Vert_{\partial T_{x}}^{2} \right)^{\frac{1}{2}} &\leq C \left( \sum_{T \in \Omega_{2} \cup \Omega_{0}} \left( \left\Vert E_{12} \right\Vert_{\partial T_{x}}^{2} + \left\Vert \mathcal{Q}_{0} E_{12} \right\Vert_{\partial T_{x}}^{2} \right) \right)^{\frac{1}{2}}\\
    &\leq C \left( \sum_{T \in \Omega_{2} \cup \Omega_{0}} \left( \left\Vert E_{12} \right\Vert_{\partial T_{x}}^{2} + h_{y}^{-1} \left\Vert E_{12} \right\Vert_{T}^{2} \right) \right)^{\frac{1}{2}}\\
    &\leq C \left( N \int_{0}^{1} e^{-\frac{2 (\beta_{1} x + \beta_{2} y)}{\varepsilon}} \,dx + \sum_{T \in \Omega_{2} \cup \Omega_{0}} h_{x} \left\Vert e^{-\frac{\beta_{1} x + \beta_{2} y}{\varepsilon}} \right\Vert_{L^{\infty}(T)}^{2} \right)^{\frac{1}{2}}\\
    &\leq C \left( N \cdot \varepsilon N^{-2\sigma} + h_{x, N} \cdot N^{2} \cdot N^{-2\sigma} \right)^{\frac{1}{2}}\\
    &\leq C N^{\frac{1}{2} - \sigma},
  \end{align*}
  Also it is easy to get similar result for $\partial T_{y}$.
  Together with the above estimates, we easily get inequality (\ref{A.8}).

  Let's prove (\ref{A.9}) now. The notation $\Omega_{\tau}$ represents any region in $\Omega_{0}, \Omega_{1}$, and $\Omega_{2}$. By the definition of $\theta_{T}$ and (\ref{A.2}), we obtain
  \begin{align*}
    \left( \sum_{T \in \Omega_{\tau}} \theta_{T} \left\Vert \nabla S - \mathbf{Q}_{N} \nabla S \right\Vert_{\partial T_{i}}^{2} \right)^{\frac{1}{2}} &= \left( \sum_{T \in \Omega_{\tau}} \varepsilon h_{j} \left\Vert \nabla S - \mathbf{Q}_{N} \nabla S \right\Vert_{\partial T_{i}}^{2} \right)^{\frac{1}{2}}\\
    &\leq C \varepsilon^{\frac{1}{2}} \left( \sum_{T \in \Omega_{12}} \left( h_{j}^{2k} + h_{i}^{2k} + h_{i}^{2(k-1)} h_{j}^{2} \right) \cdot h_{i} h_{j} \right)^{\frac{1}{2}}\\
    &\leq C \varepsilon^{\frac{1}{2}} N \left( h_{N}^{2(k+1)} \right)^{\frac{1}{2}}\\
    &\leq C N^{-(k+\frac{1}{2})},
  \end{align*}
  And for the region $\Omega_{0}$, we also have
  \begin{align*}
    \left( \sum_{T \in \Omega_{0}} \theta_{T} \left\Vert \nabla S - \mathbf{Q}_{N} \nabla S \right\Vert_{\partial T_{i}}^{2} \right)^{\frac{1}{2}} &= \left( \sum_{T \in \Omega_{0}} \varepsilon^{2} N^{-1} \left\Vert \nabla S - \mathbf{Q}_{N} \nabla S \right\Vert_{\partial T_{i}}^{2} \right)^{\frac{1}{2}}\\
    &\leq C \varepsilon \left( \sum_{T \in \Omega_{12}} N^{-1} \left( h_{j}^{2k-1} + h_{i}^{2k} h_{j}^{-1} + h_{i}^{2(k-1)} h_{j} \right) \cdot h_{i} h_{j} \right)^{\frac{1}{2}}\\
    &\leq C \varepsilon N \left( N^{-1} \cdot h_{N}^{2k} \right)^{\frac{1}{2}}\\
    &\leq C N^{-(k+\frac{1}{2})},
  \end{align*}
  where $i, j \in \{x, y\}$ and $i \neq j$. 
  For the boundary layer $E_{1}$ in region $\Omega_{12} \cup \Omega_{1}$, we have
  \begin{align*}
    &\left( \sum_{T \in \Omega_{12} \cup \Omega_{1}} \theta_{T} \left\Vert \nabla E_{1} - \mathbf{Q}_{N} \nabla E_{1} \right\Vert_{\partial T_{x}}^{2} \right)^{\frac{1}{2}} = \left( \sum_{T \in \Omega_{12} \cup \Omega_{1}} \varepsilon h_{y} \left\Vert \nabla E_{1} - \mathbf{Q}_{N} \nabla E_{1} \right\Vert_{\partial T_{x}}^{2} \right)^{\frac{1}{2}}\\
    &\leq C \varepsilon^{\frac{1}{2}}   \Bigg(   \sum_{T \in \Omega_{12} \cup \Omega_{1}}   \Big(   h_{y}^{2k} \left( \left\Vert \partial_{y}^{k} \partial_{x} E_{1} \right\Vert_{T}^{2} + \left\Vert \partial_{y}^{k+1} E_{1} \right\Vert_{T}^{2} \right) + h_{x}^{2k} \left( \left\Vert \partial_{x}^{k+1} E_{1} \right\Vert_{T}^{2} + \left\Vert \partial_{x}^{k} \partial_{y} E_{1} \right\Vert_{T}^{2} \right) \\
    & \quad + h_{x}^{2(k-1)} h_{y}^{2} \left( \left\Vert \partial_{x}^{k} \partial_{y} E_{1} \right\Vert_{T}^{2} + \left\Vert \partial_{x}^{k-1} \partial_{y}^{2} E_{1} \right\Vert_{T}^{2} \right)  \Big)   \Bigg)^{\frac{1}{2}}\\
    &\leq C \varepsilon^{\frac{1}{2}}   \Bigg(   \sum_{T \in \Omega_{12} \cup \Omega_{1}}   \Big(   h_{y}^{2k+1} h_{x} \left( \varepsilon^{-2k} + \varepsilon^{-2(k+1)} \right) + h_{x}^{2k+1} h_{y} \left( 1 + \varepsilon^{-2} \right) \\
    & \quad + h_{x}^{2k-1} h_{y}^{3} \left( \varepsilon^{-2} + \varepsilon^{-4} \right)   \Big)   \left\Vert e^{-\frac{\beta_{2} y}{\varepsilon}} \right\Vert_{L^{\infty}(T)}^{2}  \Bigg)^{\frac{1}{2}}\\
    &\leq C \varepsilon^{\frac{1}{2}} N \left( \left( h_{x, N} N^{-(2k+1)} + h_{x, N}^{2k+1} N^{-1} + h_{x, N}^{2k-1} N^{-3} \right) \cdot \left( \varepsilon + \varepsilon^{-1} \right) \right)^{\frac{1}{2}}\\
    &\leq C N^{-k},
  \end{align*}
  In region $\Omega_{2}$, using (\ref{A.5}), (\ref{A.6}) and Lemma\ref{lemma2.1} we get
  \begin{align*}
    &\left( \sum_{T \in \Omega_{2}} \theta_{T} \left\Vert \nabla E_{1} - \mathbf{Q}_{N} \nabla E_{1} \right\Vert_{\partial T_{x}}^{2} \right)^{\frac{1}{2}} = \left( \sum_{T \in \Omega_{2}} \varepsilon h_{y} \left\Vert \nabla E_{1} - \mathbf{Q}_{N} \nabla E_{1} \right\Vert_{\partial T_{x}}^{2} \right)^{\frac{1}{2}}\\
    &\leq C \varepsilon^{\frac{1}{2}} \left( \sum_{T \in \Omega_{2}} h_{y} \left( \left\Vert \partial_{x} E_{1} \right\Vert_{\partial T_{1}}^{2} + \left\Vert \partial_{y} E_{1} \right\Vert_{\partial T_{1}}^{2} \right) + \left\Vert \partial_{x} E_{1} \right\Vert_{T}^{2} + \left\Vert \partial_{y} E_{1} \right\Vert_{T}^{2} \right)^{\frac{1}{2}}\\
    &\leq C \varepsilon^{\frac{1}{2}} \left(  h_{y, N} \cdot N \left( 1 + \varepsilon^{-2}  \right) \int_{0}^{x_{N/2-1}} e^{-\frac{2 \beta_{2} y}{\varepsilon}} \,dx + \sum_{T \in \Omega_{2}} \left( 1 + \varepsilon^{-2} \right) h_{x} h_{y} \left\Vert e^{-\frac{\beta_{2} y}{\varepsilon}} \right\Vert_{L^{\infty}(T)}^{2} \right)^{\frac{1}{2}}\\
    &\leq C \varepsilon^{\frac{1}{2}} \left( \left( 1 + \varepsilon^{-2} \right) \left( \varepsilon \cdot N^{1 - 2 \sigma} + \varepsilon N^{-1} \cdot N^{2 - 2 \sigma} \right) \right)^{\frac{1}{2}}\\
    &\leq C N^{\frac{1}{2} - \sigma},
  \end{align*}
  Similarly, for the region $\Omega_{0}$ we have
  \begin{align*}
    &\left( \sum_{T \in \Omega_{0}} \theta_{T} \left\Vert \nabla E_{1} - \mathbf{Q}_{N} \nabla E_{1} \right\Vert_{\partial T_{x}}^{2} \right)^{\frac{1}{2}} = \left( \sum_{T \in \Omega_{0}} \varepsilon^{2} N^{-1} \left\Vert \nabla E_{1} - \mathbf{Q}_{N} \nabla E_{1} \right\Vert_{\partial T_{x}}^{2} \right)^{\frac{1}{2}}\\
    &\leq C \varepsilon \left( \sum_{T \in \Omega_{0}} N^{-1} \left( \left\Vert \partial_{x} E_{1} \right\Vert_{\partial T_{x}}^{2} + \left\Vert \partial_{y} E_{1} \right\Vert_{\partial T_{x}}^{2} \right) + N^{-1} h_{y}^{-1} \left( \left\Vert \partial_{x} E_{1} \right\Vert_{T}^{2} + \left\Vert \partial_{y} E_{1} \right\Vert_{T}^{2} \right) \right)^{\frac{1}{2}}\\
    &\leq C \varepsilon \left( N^{-1} \cdot N \left( 1 + \varepsilon^{-2} \right) \int_{x_{N/2-1}}^{1} e^{-\frac{2 \beta_{2} y}{\varepsilon}} \,dx + \sum_{T \in \Omega_{2}} \left( 1 + \varepsilon^{-2} \right) N^{-1} h_{x} \left\Vert e^{-\frac{\beta_{2} y}{\varepsilon}} \right\Vert_{L^{\infty}(T)}^{2} \right)^{\frac{1}{2}}\\
    &\leq C \varepsilon \left( \left( 1 + \varepsilon^{-2} \right) N^{-2 \sigma} \right)^{\frac{1}{2}}\\
    &\leq C N^{- \sigma},
  \end{align*}
  It is easy to get similar result for $\partial T_{y}$ and the boundary layer $E_{2}$ in the same way.
  By arguments similar to the ones used above, for the concer layer $E_{12}$, one has
  \begin{align*}
    &\left( \sum_{T \in \Omega_{12}} \theta_{T} \left\Vert \nabla E_{12} - \mathbf{Q}_{N} \nabla E_{12} \right\Vert_{\partial T_{x}}^{2} \right)^{\frac{1}{2}} = \left( \sum_{T \in \Omega_{12} \cup \Omega_{1}} \varepsilon h_{y} \left\Vert \nabla E_{12} - \mathbf{Q}_{N} \nabla E_{12} \right\Vert_{\partial T_{x}}^{2} \right)^{\frac{1}{2}}\\
    &\leq C \varepsilon^{\frac{1}{2}} \left( \sum_{T \in \Omega_{12}} \left( h_{y}^{2k} \left( \left\Vert \partial_{y}^{k} \partial_{x} E_{12} \right\Vert_{T}^{2} + \left\Vert \partial_{y}^{k+1} E_{12} \right\Vert_{T}^{2} \right) + h_{x}^{2k} \left( \left\Vert \partial_{x}^{k+1} E_{12} \right\Vert_{T}^{2} + \left\Vert \partial_{x}^{k} \partial_{y} E_{12} \right\Vert_{T}^{2} \right) + h_{x}^{2(k-1)} h_{y}^{2} \left( \left\Vert \partial_{x}^{k} \partial_{y} E_{12} \right\Vert_{T}^{2} + \left\Vert \partial_{x}^{k-1} \partial_{y}^{2} E_{12} \right\Vert_{T}^{2} \right) \right) \right)^{\frac{1}{2}}\\
    &\leq C \varepsilon^{\frac{1}{2}} \left( \sum_{T \in \Omega_{12}} \varepsilon^{-2(k+1)} \left( h_{y}^{2k+1} h_{x} + h_{x}^{2k+1} h_{y} + h_{x}^{2k-1} h_{y}^{3} \right) \left\Vert e^{-\frac{\beta_{1} x + \beta_{2} y}{\varepsilon}} \right\Vert_{L^{\infty}(T)}^{2} \right)^{\frac{1}{2}}\\
    &\leq C \varepsilon^{\frac{1}{2}} N \left( N^{-(2k+1)} \cdot N^{-1} + N^{-(2k+1)} N^{-1} + N^{-(2k-1)} N^{-3} \right)^{\frac{1}{2}}\\
    &\leq C \varepsilon^{\frac{1}{2}} N^{-k},
  \end{align*}
  Using (\ref{A.5}), (\ref{A.6}) and Lemma\ref{lemma2.1}, we obtain
  \begin{align*}
    &\left( \sum_{T \in \Omega_{1}} \theta_{T} \left\Vert \nabla E_{12} - \mathbf{Q}_{N} \nabla E_{12} \right\Vert_{\partial T_{x}}^{2} \right)^{\frac{1}{2}} = \left( \sum_{T \in \Omega_{1}} \varepsilon h_{2} \left\Vert \nabla E_{12} - \mathbf{Q}_{N} \nabla E_{12} \right\Vert_{\partial T_{x}}^{2} \right)^{\frac{1}{2}}\\
    &\leq C \varepsilon^{\frac{1}{2}} \left( \sum_{T \in \Omega_{1}} h_{y} \left( \left\Vert \partial_{x} E_{12} \right\Vert_{\partial T_{x}}^{2} + \left\Vert \partial_{y} E_{12} \right\Vert_{\partial T_{x}}^{2} \right) + \left\Vert \partial_{x} E_{12} \right\Vert_{T}^{2} + \left\Vert \partial_{y} E_{12} \right\Vert_{T}^{2} \right)^{\frac{1}{2}}\\
    &\leq C \varepsilon^{\frac{1}{2}} \left(  h_{y, \frac{N}{2}-1} \cdot N \varepsilon^{-2} \int_{x_{N/2-1}}^{1} e^{-\frac{2 (\beta_{1} x + \beta_{2} y)}{\varepsilon}} \,dx + \sum_{T \in \Omega_{1}} \varepsilon^{-2} h_{x} h_{y} \left\Vert e^{-\frac{\beta_{1} x + \beta_{2} y}{\varepsilon}} \right\Vert_{L^{\infty}(T)}^{2} \right)^{\frac{1}{2}}\\
    &\leq C \varepsilon^{\frac{1}{2}} \left( \varepsilon^{-2} \left( \varepsilon^{2} \cdot N^{1 - 2 \sigma} + \varepsilon N^{-1} \cdot N^{2 - 2 \sigma} \right) \right)^{\frac{1}{2}}\\
    &\leq C N^{\frac{1}{2} - \sigma},
  \end{align*}
  In the same way, we also have
  \begin{align*}
    &\left( \sum_{T \in \Omega_{2}} \theta_{T} \left\Vert \nabla E_{12} - \mathbf{Q}_{N} \nabla E_{12} \right\Vert_{\partial T_{x}}^{2} \right)^{\frac{1}{2}} = \left( \sum_{T \in \Omega_{2}} \varepsilon h_{2} \left\Vert \nabla E_{12} - \mathbf{Q}_{N} \nabla E_{12} \right\Vert_{\partial T_{x}}^{2} \right)^{\frac{1}{2}}\\
    &\leq C \varepsilon^{\frac{1}{2}} \left( \sum_{T \in \Omega_{2}} h_{y} \left( \left\Vert \partial_{x} E_{12} \right\Vert_{\partial T_{x}}^{2} + \left\Vert \partial_{y} E_{12} \right\Vert_{\partial T_{x}}^{2} \right) + \left\Vert \partial_{x} E_{12} \right\Vert_{T}^{2} + \left\Vert \partial_{y} E_{12} \right\Vert_{T}^{2} \right)^{\frac{1}{2}}\\
    &\leq C \varepsilon^{\frac{1}{2}} \left( h_{y, N} \cdot N \varepsilon^{-2} \int_{0}^{x_{N/2-1}} e^{-\frac{2 (\beta_{1} x + \beta_{2} y)}{\varepsilon}} \,dx + \sum_{T \in \Omega_{2}} \varepsilon^{-2} h_{x} h_{y} \left\Vert e^{-\frac{\beta_{1} x + \beta_{2} y}{\varepsilon}} \right\Vert_{L^{\infty}(T)}^{2} \right)^{\frac{1}{2}}\\
    &\leq C \varepsilon^{\frac{1}{2}} \left( \varepsilon^{-2} \left( \varepsilon \cdot N^{- 2 \sigma} + \varepsilon N^{-1} \cdot N^{2 - 2 \sigma} \right) \right)^{\frac{1}{2}}\\
    &\leq C N^{\frac{1}{2} - \sigma},
  \end{align*}
  and
  \begin{align*}
    &\left( \sum_{T \in \Omega_{0}} \theta_{T} \left\Vert \nabla E_{12} - \mathbf{Q}_{N} \nabla E_{12} \right\Vert_{\partial T_{x}}^{2} \right)^{\frac{1}{2}} = \left( \sum_{T \in \Omega_{0}} \varepsilon^{2} N^{-1} \left\Vert \nabla E_{12} - \mathbf{Q}_{N} \nabla E_{12} \right\Vert_{\partial T_{x}}^{2} \right)^{\frac{1}{2}}\\
    &\leq C \varepsilon \left( \sum_{T \in \Omega_{0}} N^{-1} \left( \left\Vert \partial_{x} E_{12} \right\Vert_{\partial T_{x}}^{2} + \left\Vert \partial_{y} E_{12} \right\Vert_{\partial T_{x}}^{2} \right) + N^{-1} h_{y}^{-1} \left( \left\Vert \partial_{x} E_{12} \right\Vert_{T}^{2} + \left\Vert \partial_{y} E_{12} \right\Vert_{T}^{2} \right) \right)^{\frac{1}{2}}\\
    &\leq C \varepsilon \left( N^{-1} \cdot N \varepsilon^{-2} \int_{x_{N/2-1}}^{1} e^{-\frac{2 (\beta_{1} x + \beta_{2} y)}{\varepsilon}} \,dx + \sum_{T \in \Omega_{2}}  \varepsilon^{-2} N^{-1} h_{x} \left\Vert e^{-\frac{\beta_{1} x + \beta_{2} y}{\varepsilon}} \right\Vert_{L^{\infty}(T)}^{2} \right)^{\frac{1}{2}}\\
    &\leq C \varepsilon \left( \varepsilon^{-2} \left( \varepsilon N^{-2 \sigma} +N^{-2 \sigma} \right) \right)^{\frac{1}{2}}\\
    &\leq C N^{- \sigma},
  \end{align*}
  Also it is easy to get similar result for $\partial T_{y}$. Combining the above estimates, We have completed the proof of (\ref{A.9}).

  Finally, for the last inequality (\ref{A.10}), note that $\vartheta_{T} = \varepsilon h_{j}^{-1} \leq C \varepsilon h_{1}^{-1} \leq C N$ in regions $\Omega_{1}$, $\Omega_{2}$, $\Omega_{12}$ and $\vartheta_{T} = N$ in region $\Omega_{0}$. Together with (\ref{A.8}) we get
  \begin{align*}
    \left( \sum_{T\in\mathcal{T}_{N}} \vartheta_{T} \left\Vert u - \mathcal{Q}_{0} u \right\Vert_{\partial T_{i}}^{2} \right)^{\frac{1}{2}} 
    &\leq C N^{-k}.
  \end{align*}
  At this point, all the estimates are proved.

\end{proof}

\bibliography{library}
\bibliographystyle{siam}


\newpage

\end{document}